\numberwithin{equation}{section}
\newcommand{\norm}[1]{\left\Vert#1\right\Vert}
\newtheorem{theorem}{Theorem}[section]
\newtheorem{lemma}{Lemma}[section]
\newtheorem{algorithm}{Algorithm}[section]
\newcommand{\s}{\mathcal{S}}
\newcommand{\abs}[1]{\left\vert#1\right\vert}
\newcommand{\bu}{{ u}}
\newcommand{\bB}{{ B}}
\newcommand{\bv}{{ v}}
\newcommand{\bff}{{ f}}
\newcommand{\bfg}{{ g}}
\newcommand{\bw}{{ w}}
\newcommand{\bfV}{{\bf V}}
\newcommand{\bX}{{X }}
\begin{document}

\date{}

\author{Aytekin Cibik \footnotemark[1]\, \hspace{.1in}Fatma G. Eroglu \footnotemark[2] \hspace{.1in} Song\"{u}l Kaya\footnotemark[3]  }


\title{Numerical analysis of an efficient second order time filtered backward Euler method for MHD equations}

\maketitle

\renewcommand{\thefootnote}{\fnsymbol{footnote}}

\footnotetext[1]{Department of  Mathematics, Gazi University, 06550 Ankara, Turkey; email: abayram@gazi.edu.tr}
\footnotetext[2]{Department of Mathematics, Faculty of Science, Bart{\i}n University, 74110 Bart{\i}n, Turkey; email: fguler@bartin.edu.tr.}

\footnotetext[3]{Department of Mathematics, Institute of Applied Mathematics, Middle East
Technical University, 06800, Ankara, Turkey; email:
smerdan@metu.edu.tr}


\begin{abstract}
The present work is devoted to introduce the backward Euler based modular time filter method for MHD flow. The proposed method improves the accuracy of the solution without a significant change in the complexity of the system. Since time filters for fluid variables are added as  separate post processing steps, the method can be easily incorporated into an existing backward Euler code. We investigate the conservation and long time stability properties of the improved scheme. Stability and second order convergence of the method are also proven. The influences of introduced time filter method on several numerical experiments are given, which both verify the theoretical findings and illustrate its usefulness on practical problems.
\end{abstract}

Keywords: time filter, backward Euler, MHD equations

\section{Introduction}
This paper considers a modular time filter method combined with the backward Euler method for the magnetohydrodynamics (MHD) flow problems. A simple method of incorporating this time filter into an existing code is to add extra lines for each fluid variables, thus it can be considered as a post processing step. As discussed in \cite{GL18}, for ODEs adding such time filter to backward Euler not only increases accuracy from first order to second order, but also reduces spurious oscillations of numerical solutions, preserves A-stability of the method and yields a useful error estimator.
Recently, the time filter of \cite{GL18} was considered for Navier-Stokes equations for constant and variable time steps by DeCaria, Layton and Zhao in \cite{DLZ19}, resulting a stable, second order time accurate adaptive method with a low complexity.

The goal of this paper is to extend this novel idea from  \cite{DLZ19} of time accurate flow approximation to the  MHD system for constant time steps,  which describes the mutual interaction between the magnetic field and electrically conductive fluids. These flows have diverse applications in, e.g., hydrology, geophysics, astrophysics and cooling system designs \cite{D01,HK09,B08,SA15}. It was first presented by Ladyzhenskaya and has been developed in \cite{CASE10,GLP04,GT05,GT07,LP10,LW0}. Using Navier Stokes equations (NSE) and Maxwell equations, the governing equations of MHD system are given by
\begin{eqnarray}
u_t  - Re^{-1}\Delta u + u\cdot\nabla u   - s B \cdot \nabla B + \nabla P & = & f, \label{mhd5}\\
\nabla \cdot u & = & 0,  \label{mhd6}\\
B_t - Re_m^{-1}\Delta B + u\cdot\nabla B - B\cdot\nabla u - \nabla \lambda & = &
\nabla \times g, \label{mhd7}\\
\nabla \cdot B & = & 0  \label{mhd8}
\end{eqnarray}
in a bounded polyhedral domain $\Omega \subset \mathbb{R}^d, d\in \{2,3\}$. Here, $u$, $P:=p+\frac{s}{2}|B|^2$, $p$  and $B$ denote the unknown velocity, modified pressure, pressure  and magnetic field, respectively. The body forces $\bff$ and $\nabla \times g$ are forcing on the velocity and magnetic field, respectively. Also, $Re$ is the Reynolds number, $Re_m$ is the magnetic Reynolds number, and $s$ is the coupling number. The Lagrange multiplier (dummy variable) $\lambda$ corresponds to the solenoidal constraint on the magnetic field.  In the continuous case, provided the initial condition $B_0$ is solenoidal, then the use of $\lambda$ is unnecessary, see \cite{CASE10}.  However, when discretizing with the finite element method, this solenoidal constraint  is needed to be enforced explicitly and thus the additional variable is required. We also assume that  the system \eqref{mhd5}-\eqref{mhd8} is equipped with homogeneous Dirichlet boundary conditions for velocity and the magnetic field.

Due to the coupling of the equations of the velocity and the magnetic field, developing efficient, accurate numerical methods for solving MHD system \eqref{mhd5}-\eqref{mhd8} remains a great challenge in computational fluid dynamics community. It is well known that time filter methods combined with leapfrog scheme are commonly used in geophysical fluid dynamics to reduce spurious oscillations to improve predictions, see e.g.\cite{R72}, but these methods degrade the numerical accuracy and over damps the physical mode. A successfully tuned  model was developed by Williams \cite{W09} reducing undesired numerical damping of \cite{R72} with higher order accuracy, see \cite{AK11,LT16,T14,W11} and references therein. On the other hand, in practice, the use of the backward Euler method is often preferred to extend a code for the steady state problem and this yields stable but inefficient time accurate transient solutions, see \cite{G98}. To improve this behavior,  time filters are used to stabilize the backward Euler discretizations in \cite{GL18} for the classical numerical ODE theory. 

The present work extends the method of \cite{DLZ19} tailored to MHD flows for constant time step. As it is mentioned in this study, the constant time step method is equivalent to a general second order, two step and A-stable method given in \cite{GR79} and \cite{JMR}. The scheme we consider is the time filtered backward Euler method,  which is  efficient,  $\mathcal{O}(\Delta t^2)$ and amenable to implementation in existing legacy codes. In addition, we also consider the numerical conservation of physically conserved quantities such as the energy and the helicity. It is worth noting that for ideal MHD with periodic boundary conditions, we prove both analytically and numerically the time filtered backward Euler method preserves the exact conservation of energy and helicity with the strong enforcement of the solenoidal constraints on the velocity and magnetic field. In addition, we prove the method's velocity and magnetic field are both stable and long time stable without any time step restriction.

This paper is arranged as follows. Section 2 gathers notations and preliminary results which will be used for the analysis. In Section 3, the time filtered backward Euler method is described along with the proof of conservation properties. Section 4 presents stability and convergence  analysis of for the fully discrete scheme. Numerical experiments are presented to verify theoretical results in Section 5. Finally, conclusions of the paper are given in Section 6.

\section{Notation and Preliminaries  }

Standard notations of Lebesgue and Sobolev spaces are used throughout this paper.  The inner product
of $(L^2(\Omega))^d$, will be denoted by $(\cdot,\cdot)$, the norm in $(L^2(\Omega))^d$ by $\|\cdot\|$ and the norm in the Hilbert space $(H^k(\Omega))^d$  by $\|\cdot\|_k$. For clarity of presentation, we assume no-slip boundary conditions.
We consider the classical function spaces
\begin{eqnarray}
X &=& (H_0^1(\Omega))^d :=\{v\in (L^2(\Omega))^d:\nabla v \in L^2(\Omega)^{d\times d}, \, v= 0\,\mbox{on}\,\partial \Omega\},
\nonumber
\\
Q &=& L_0^2(\Omega) := \{q \in L^2(\Omega): \int_{\Omega} q \ dx = 0\}. \nonumber
\end{eqnarray}
The norm of the dual space $H^{-1}$ of $X$ is denoted by $\|\cdot\|_{-1}$. As usual, one has $X\subset L^2(\Omega)\subset H^{-1}(\Omega)$ with compact injection.
The divergence free velocity space is given by
\begin{equation*}
  V:=\{v\in X, (\nabla \cdot v, q)= 0, \forall q\in Q\}.
\end{equation*}
We define the following norms for all Lebesgue measurable $w:[0,T]\to X$:
\begin{eqnarray}
	\norm{w}_{L^p(0,T;X)}&=&\bigg(\int_{0}^{T}\|w(t)\|_{X}^p dt\bigg)^{1/p},\quad 1\leq p<\infty\nonumber\\
	\norm{w}_{L^{\infty}(0,T;X)}&=&ess \sup_{0\leq t\leq T}\|w(t)\|_{X}.
\end{eqnarray}
In the error analysis, we use the Poincar\'e-Friedrichs' inequality,
\begin{equation}
\norm{v} \leq C_p \norm{\nabla v},\label{pof}
\end{equation}
for all $v\in X$, where $C_p$ is a constant depending only on the size of $\Omega$. The following properties of for the skew symmetric form are necessary in the  analysis.
\begin{lemma} \label{tribound} The trilinear skew-symmetric form $(u \cdot \nabla v, w)$ satisfies
\begin{eqnarray}
\begin{array}{rcl}
(u \cdot \nabla v, w)&\leq& C \norm{\nabla u} \norm{\nabla v}\norm{\nabla w}, \label{tribound1}\\
(u \cdot \nabla v, w)&\leq&C \norm{u}^{1/2} \norm{\nabla u}^{1/2} \norm{\nabla v} \norm{\nabla w}, \\
 (u \cdot \nabla v, v)&=&0\label{tribound2}
\end{array}
\end{eqnarray}
for all $ u,v, w\in X$.
\end{lemma}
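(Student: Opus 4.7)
The approach is completely standard: this is a collection of textbook bounds on the convective trilinear form, so the plan is to combine Hölder's inequality, Sobolev embeddings of $X = (H^1_0(\Omega))^d$, an interpolation (Gagliardo-Nirenberg / Ladyzhenskaya) inequality, and the Poincaré inequality \eqref{pof}. The only minor subtlety is that $(u\cdot\nabla v, w)$ here denotes the skew-symmetrized trilinear form
\[
b^*(u,v,w) \;=\; \tfrac{1}{2}\bigl[(u\cdot\nabla v, w) - (u\cdot\nabla w, v)\bigr],
\]
which is the form actually used throughout the analysis; bounds on $b^*$ follow from bounds on each of its two terms, so I will prove the estimates for the unsymmetrized version $(u\cdot\nabla v, w)$ and observe that they transfer verbatim.

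For the first inequality I would apply Hölder with exponents $(3,2,6)$:
\[
\bigl|(u\cdot\nabla v, w)\bigr| \;\le\; \|u\|_{L^3}\,\|\nabla v\|_{L^2}\,\|w\|_{L^6}.
\]
Then the Sobolev embeddings $H^1_0(\Omega)\hookrightarrow L^6(\Omega)$ (valid for $d\le 3$) and $H^1_0(\Omega)\hookrightarrow L^3(\Omega)$ yield $\|u\|_{L^3}\le C\|\nabla u\|$ and $\|w\|_{L^6}\le C\|\nabla w\|$, giving the stated bound with a constant $C$ that absorbs the embedding constants.

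For the second, sharper inequality I would use the same Hölder triple but estimate $\|u\|_{L^3}$ by the interpolation inequality
\[
\|u\|_{L^3} \;\le\; C\,\|u\|^{1/2}\,\|u\|_{L^6}^{1/2} \;\le\; C\,\|u\|^{1/2}\,\|\nabla u\|^{1/2},
\]
where the first step is Hölder interpolation between $L^2$ and $L^6$ and the second is again the Sobolev embedding. Combined with $\|w\|_{L^6}\le C\|\nabla w\|$ this delivers
\[
\bigl|(u\cdot\nabla v, w)\bigr| \;\le\; C\,\|u\|^{1/2}\|\nabla u\|^{1/2}\,\|\nabla v\|\,\|\nabla w\|.
\]
(In $d=2$ one can alternatively use Hölder with $(4,2,4)$ together with Ladyzhenskaya $\|u\|_{L^4}\le C\|u\|^{1/2}\|\nabla u\|^{1/2}$ and Poincaré \eqref{pof} on $w$, which reaches the same conclusion.)

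The skew-symmetry identity $(u\cdot\nabla v, v)=0$ is immediate once one reads $(u\cdot\nabla v, w)$ as $b^*(u,v,w)$: swapping $w\leftrightarrow v$ in the second summand of $b^*$ shows $b^*(u,v,v)=0$ algebraically. Equivalently, for a direct verification, an integration by parts gives $(u\cdot\nabla v, v)=\tfrac{1}{2}(u,\nabla|v|^2)=-\tfrac12(\nabla\cdot u,|v|^2)+\tfrac12\int_{\partial\Omega}(u\cdot n)|v|^2\,ds$; the boundary integral vanishes because $v\in X$, and the skew-symmetrization kills the remaining divergence term without needing $\nabla\cdot u=0$. I do not foresee any real obstacle here; the only thing to be careful about is making the dimensional dependence explicit so that the constants $C$ are valid in both $d=2$ and $d=3$, which is why I would route the 3D argument through $L^3$-$L^6$ rather than through $L^4$-$L^4$.
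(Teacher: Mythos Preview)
Your proposal is correct and follows exactly the approach indicated in the paper, which merely states that the result follows from H\"older's inequality, the interpolation theorem, the Sobolev embedding theorem, and the Poincar\'e inequality, with a reference to the literature. Your write-up is in fact considerably more detailed than the paper's one-line proof sketch, and your observation that the identity $(u\cdot\nabla v,v)=0$ for \emph{all} $u\in X$ only makes sense for the skew-symmetrized form is a worthwhile clarification that the paper leaves implicit.
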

\begin{proof}
 Utilizing  H\"older's inequality, interpolation theorem, Sobolev embedding theorem and Poincar\'e inequality gives the stated results, see \cite{WJL8}.
\end{proof}
We use conforming finite element spaces based on edge to edge triangulations of $\Omega$ (with maximium element diameter $h$) by $X_h\subset X$ and $Q_h\subset Q$. In the computations, we consider the Scott-Vogelius finite element spaces for velocity-pressure and magnetic field-Lagrange multiplier pairs. It is well known that on a barycenter refinement of regular mesh, this element satisfies the discrete inf-sup condition, see \cite{GR79} and the optimal approximation properties, \cite{Zhang05}. Since Scott-Vogelius elements enforce mass conservation pointwisely  for both velocity and magnetic field, e.g.
\begin{eqnarray} \label{divuB}
\nabla \cdot u_h^n & = & 0, \nonumber  \\
\nabla \cdot B_h^n & = & 0,
\end{eqnarray}
it has been successfully used for multiphysics problems, see e.g.\cite{BA08,CASE10}.

Following \cite{J16}, one admits the optimal approximation properties for the velocity and magnetic field.
\begin{eqnarray}
\inf_{v_{h} \in X_{h}} \left \{\| {u-v_{h}}\| + h \|
{\nabla (u-v_{h})}\| \right\} &\leq C h^{s+1} \norm {u} _{s+1}, \label{app1}
\\
\inf_{B_{h} \in X_{h}} \left \{\| {B-B_{h}}\| + h \|
{\nabla (B-B_{h})}\| \right\} &\leq C h^{s+1} \norm {B} _{s+1}.\label{app2}
\end{eqnarray}
The discretely divergence-free space is defined by
\begin{eqnarray*}
{V}_h=\{v_h\in X_h:(q_h,\,\nabla\cdot
v_h)=0, \,\forall\,q_h \in {Q}_h\},
\end{eqnarray*}
which is also the divergence-free subspace of $X_h$ when using Scott-Vogelius pair.

We also use the following space in the analysis:
\begin{eqnarray}
	L_{\infty}(\mathbb{R}_+,\bfV_h^*)=\{\bfg\in \Omega^d\times\mathbb{R}_+\to \mathbb{R}^d, \, a.e.\,\, \, t >0,\, \exists M< \infty, \, \|\bfg(t)\|_{\bfV_h^*}<M\},
\end{eqnarray}
where $\bfV_h^*$ and $\|\cdot\|_{\bfV_h^*}$ are dual spaces of $\bfV_h$ and its norm which is given by
\begin{eqnarray}
\|\bw\|_{\bfV_h^*}=\sup_{\bv_h\in \bfV_h^*}\dfrac{(\bw,\bv_h)}{\|\nabla\bv_h\|}.
\end{eqnarray}
 The following discrete Gronwall lemma, stated in \cite{HR4} plays an important role in the error analysis.
\begin{lemma}\label{gron}[Discrete Gronwall Lemma]
Let $\Delta t$, M, and $\alpha_n,\beta_n,\xi_n,\delta_n$ (for integers $n\geq 0$) be finite nonnegative numbers such that
\begin{eqnarray*}
\alpha_m +\Delta t \sum_{n=0}^{m} \beta_n \leq \Delta t \sum_{n=0}^{m} \delta_n\alpha_n +\Delta t \sum_{n=0}^{m} \xi_n + M\quad\mbox{for}\quad m\geq 0.
\end{eqnarray*}
Suppose $\Delta t \delta_n<1$ for all $n$, then
\begin{eqnarray*}
\alpha_m +\Delta t \sum_{n=0}^{m} \beta_n \leq \exp\Bigg(\Delta t \sum_{n=0}^{m}\beta_n \frac{\delta_n}{1-\Delta t \delta_n}\Bigg) \Bigg(\Delta t \sum_{n=0}^{m} \xi_n + M\Bigg)\quad\mbox{for}\quad m\geq 0.
\end{eqnarray*}
\end{lemma}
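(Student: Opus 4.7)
The plan is to absorb the $\beta_n$-terms into a single nondecreasing quantity and then iterate a one-step inequality. First, I would set $S_m := \alpha_m + \Delta t \sum_{n=0}^{m} \beta_n$ and $R_m := \Delta t \sum_{n=0}^{m} \xi_n + M$. Because $\beta_n, \Delta t \geq 0$, one has $\alpha_n \leq S_n$ for every $n$, so the hypothesis upgrades to
$$S_m \;\leq\; \Delta t \sum_{n=0}^{m} \delta_n S_n + R_m.$$
The point of working with $S_m$ rather than $\alpha_m$ is that, once the inequality for $S_m$ is proven, it is already in the form claimed by the lemma.

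Next, I would split off the diagonal term $n = m$, since this is the only one that can legitimately be moved to the left-hand side. Using $\Delta t \delta_m < 1$ to divide by $1 - \Delta t \delta_m > 0$ produces a genuine recurrence:
$$S_m \;\leq\; \frac{\Delta t}{1 - \Delta t \delta_m} \sum_{n=0}^{m-1} \delta_n S_n \;+\; \frac{R_m}{1 - \Delta t \delta_m}.$$
This is the workhorse inequality. Observe that $R_m$ is nondecreasing in $m$, so whenever an $R_n$ appears with $n \leq m$ it can be replaced by $R_m$, which is what eventually allows $R_m$ to be factored outside the iteration.

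The final step is induction on $m$ to conclude $S_m \leq R_m \exp\!\bigl(\Delta t \sum_{n=0}^{m} \delta_n/(1 - \Delta t \delta_n)\bigr)$. The base case $m=0$ reads $S_0 \leq R_0/(1 - \Delta t \delta_0)$, which is covered by the elementary inequality $1/(1-x) \leq \exp\bigl(x/(1-x)\bigr)$ for $x \in [0,1)$, itself an immediate consequence of $1 + y \leq e^y$ with $y = x/(1-x)$. The inductive step plugs the bound for $S_n$, $n < m$, into the recurrence, uses monotonicity of $R_n$ to factor $R_m$ out of the sum, and applies the same exponential inequality once more to convert the leftover $1/(1 - \Delta t \delta_m)$ into an additional factor $\exp\bigl(\Delta t \delta_m/(1-\Delta t \delta_m)\bigr)$, which telescopes into the exponent.

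The main obstacle is purely algebraic: the recursion multiplies a sum by $1/(1 - \Delta t \delta_m)$, while the target is an exponential of a sum. Reconciling these requires careful bookkeeping so that, across the induction, the product of all the $1/(1 - \Delta t \delta_n)$ factors yields exactly the sum $\Delta t \sum_{n=0}^{m} \delta_n/(1 - \Delta t \delta_n)$ in the exponent, with the inner sum $\Delta t \sum_{n=0}^{m-1} \delta_n (\cdots)$ being dominated by the same exponential expression one step shorter. No further analytic input is needed; the result is a purely arithmetic comparison lemma in the spirit of the continuous Gronwall inequality.
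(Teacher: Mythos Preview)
The paper does not prove this lemma; it simply states it and cites Heywood--Rannacher. Your sketch is a correct reconstruction of the standard argument: absorb the $\beta_n$-sum into $S_m$, peel off the diagonal term using $\Delta t\delta_m<1$, and iterate. The bookkeeping you flag as the main obstacle is handled by the telescoping identity $\Delta t\sum_{n=0}^{m-1}\delta_n E_n + 1 = E_{m-1}$, where $E_j=\prod_{k=0}^{j}(1-\Delta t\delta_k)^{-1}$; once that is in place, the induction closes cleanly, and the product-to-exponential step is exactly the inequality $1/(1-x)\le e^{x/(1-x)}$ you already noted.

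One side remark: the conclusion as printed in the paper carries a stray $\beta_n$ inside the exponential. Your argument proves the correct bound $\exp\bigl(\Delta t\sum_{n=0}^{m}\delta_n/(1-\Delta t\delta_n)\bigr)$, which is the form actually used later in the convergence analysis.
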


\section{Time Filtered MHD Equations }

Time filtered finite element scheme we study consists of two steps. In the first step the usual backward Euler method is applied to MHD equations and the second step includes the linear combination of solutions at previous time levels. As we will prove later, while the second step doesn't require additional function evaluations, it has a profound impact on the solution quality such that it increases time accuracy. By assuming the prescribed values are nodal interpolants of the fluid variables, we now present the finite element approximation of \eqref{mhd5}-\eqref{mhd8} for constant time step method. Let $T$ denote the final time, $M$ denote the number of time steps to take and define the time step ${\Delta T}=\frac{T}{M}$. The fully discrete solution at time $t_{n}=n\Delta t$, $n=0,1,2...M$ will be denoted by $u^n_h$ and $B^n_h$. The scheme applied to the problem \eqref{mhd5}-\eqref{mhd8} reads as follows:

\begin{algorithm}\label{algo}
Given $u_{h}^{n-1}, u_{h}^{n},B_{h}^{n-1},B_{h}^{n},P_{h}^{n-1},P_{h}^{n},\lambda_{h}^{n-1},\lambda_{h}^{n}$,
find $(u_{h}^{n+1},B_{h}^{n+1},P_{h}^{n+1},\\ \lambda_{h}^{n+1})
\in (X_h,X_h,Q_h,Q_h)$ satisfying\\
{\bf Step 1:}
\begin{eqnarray}
\frac{1}{\Delta t}(\tilde {\bu_h}^{n+1}-\bu_{h}^{n},v_{h}) +Re^{-1}(\nabla \tilde {\bu}_{h}^{n+1},\nabla \bv_{h})
+ (\tilde {\bu}_{h}^{n+1}\cdot \nabla \tilde {\bu}_{h}^{n+1},\bv_{h})
\nonumber\\
-s(\tilde{\bB}_{h}^{n+1}\cdot \nabla \tilde{\bB}_{h}^{n+1},\bv_{h})
-(\tilde{P}_{h}^{n+1},\nabla \cdot \bv_{h})
=(\bff(t^{n+1}),\bv_{h}),\label{BE1}\\
(\nabla\cdot \tilde {\bu}_{h}^{n+1},q_{h})= 0,\label{BE2} \\
\frac{1}{\Delta t}(\tilde{\bB}_{h}^{n+1}-\bB_{h}^{n},\chi_{h})
+Re_{m}^{-1}(\nabla \tilde{\bB}_{h}^{n+1},\nabla\chi_{h})
-(\tilde{\bB}_{h}^{n+1}\cdot \nabla  \tilde{\bu}_{h}^{n+1},\chi_{h})\nonumber\\
+(\tilde{\bu}_{h}^{n+1}\cdot \nabla \tilde{\bB}_{h}^{n+1},\chi_{h})
+(\tilde{\lambda}_{h}^{n+1},\nabla\cdot\chi_{h})
=(\nabla\times \bfg(t^{n+1}),\chi_{h}),\label{BE3}\\
(\nabla\cdot \tilde{\bB_{h}}^{n+1},r_{h}) = 0, \label{BE4}
\end{eqnarray}
{\bf Step 2:}
\begin{eqnarray}
\bu_{h}^{n+1}& =&\tilde {\bu}_h^{n+1}-\frac{1}{3}(\tilde {\bu}_h^{n+1}-2\bu_h^n+\bu_h^{n-1}),\label{stpu} \\
\bB_{h}^{n+1} &=&\tilde{\bB}_h^{n+1}-\frac{1}{3}(\tilde{\bB}_h^{n+1}-2\bB_h^n+\bB_h^{n-1}), \\
P_{h}^{n+1} &=&\tilde{P}_h^{n+1}-\frac{1}{3}(\tilde{P}_h^{n+1}-2P_h^n+P_h^{n-1}), \\
\lambda_{h}^{n+1} &=&\tilde{\lambda}_h^{n+1}-\frac{1}{3}(\tilde{\lambda}_h^{n+1}-2{\lambda}_h^n+{\lambda}_h^{n-1}), \label{stpL}
\end{eqnarray}
for all $(\bv_{h},\chi_{h},q_{h},r_{h}) \in
(\bX_{h},\bX_{h},Q_{h},Q_{h})$.
\end{algorithm}
Step 1, without Step 2, is the classical backward Euler scheme for MHD equations analyzed in \cite{CASE10}.
The numerical efficiency of the method is obvious. Step 2 is just an application of time filters as a modular step and its implementation is easy. 

By using the following operator, Step 2 can be embedded into Step 1 in the following way. Define the interpolation operator $\mathcal{F}$ as
\begin{eqnarray}
	\mathcal{F}[\bw_h^{n+1}]=\frac{3}{2} {\bw}_h^{n+1}-\bw_h^n+\frac{1}{2}\bw_h^{n-1}\label{intp}
\end{eqnarray}
which is formally $\mathcal{F}[\bw_h^{n+1}]=\bw_h^{n+1}+O(\Delta t^2)$. Note that reorganizing (\ref{stpu}) gives $\tilde {\bu}_{h}^{n+1}=\frac{3}{2} {\bu}_h^{n+1}-\bu_h^n+\frac{1}{2}\bu_h^{n-1}$. If one repeats same calculations for the other variables, inserting all of them in (\ref{BE1})-(\ref{BE4}) along with (\ref{intp}) gives
\begin{eqnarray}
\lefteqn{\frac{1}{\Delta t}(\frac{3}{2} {\bu}_h^{n+1}-2\bu_h^n+\frac{1}{2}\bu_h^{n-1},v_{h}) +Re^{-1}(\nabla(\mathcal{F}[\bu_h^{n+1}]),\nabla \bv_{h})
}\nonumber\\&&+ (\mathcal{F}[\bu_h^{n+1}]\cdot \nabla(\mathcal{F}[\bu_h^{n+1}]),\bv_{h})
-s(\mathcal{F}[\bB_h^{n+1}]\cdot \nabla(\mathcal{F}[\bB_h^{n+1}]),\bv_{h})\nonumber\\
&&-(\mathcal{F}[P_h^{n+1}],\nabla \cdot \bv_{h})
=(\bff(t^{n+1}),\bv_{h}),\label{sBE1}\\[9pt]
\lefteqn{(\nabla\cdot (\mathcal{F}[\bu_h^{n+1}]),q_{h})= 0,}\label{sBE2} \\[9
pt]
\lefteqn{\frac{1}{\Delta t}(\frac{3}{2}{\bB}_h^{n+1}-2\bB_h^n+\frac{1}{2}\bB_h^{n-1},\chi_{h})
	+Re_{m}^{-1}(\nabla \mathcal{F}[\bB_h^{n+1}],\nabla\chi_{h})}\nonumber\\
&&-(\mathcal{F}[\bB_h^{n+1}]\cdot \nabla(\mathcal{F}[\bu_h^{n+1}]),\chi_{h})+(\mathcal{F}[\bu_h^{n+1}]\cdot\nabla(\mathcal{F}[\bB_h^{n+1}]),\chi_{h})\nonumber\\
&&
+(\mathcal{F}[{\lambda}_{h}^{n+1}],\nabla\cdot\chi_{h})
=(\nabla\times \bfg(t^{n+1}),\chi_{h}),\label{sBE3}\\[9pt]
\lefteqn{(\nabla\cdot (\mathcal{F}[\bB_h^{n+1}]),r_{h}) = 0,} \label{sBE4}
\end{eqnarray}
for all $(\bv_{h},\chi_{h},q_{h},r_{h}) \in
(\bX_{h},\bX_{h},Q_{h},Q_{h})$. Naturally, the formulations (\ref{BE1})-(\ref{BE4}) and (\ref{sBE1})-(\ref{sBE4}) are equivalent. For simplicity of analysis, the equivalent formulation (\ref{sBE1})-(\ref{sBE4}) of the method will be used for the complete stability and convergence analysis. However, the utilization of the method for computer simulations will be based on (\ref{BE1})-(\ref{BE4}).

In the analysis, we use the following $G$-norm and $F$-norm. In general since $G$- stability implies $A$-stability, the use of $G$-matrix is very common in  BDF2 analysis, see e.g.,\cite{HW02} and references therein. These norms and properties are already have been given in \cite{JMR}.  With respect to notation of \cite{JMR},(see page 392),  analysis of the described method here uses the choices of $\theta=1$ and $\nu=2\epsilon$,
$$G=\begin{pmatrix}
 	\frac{3}{2}& 	-\frac{3}{4} \\
	-\frac{3}{4} &	\frac{1}{2}
 	\end{pmatrix},
$$
and the $G$-norm is given by
\begin{eqnarray}
\norm{ \begin{bmatrix}
 	\bu\\
 	\bv
 	\end{bmatrix}}_{G}^{2} =\big( \begin{bmatrix}
 \bu\\
 \bv
 \end{bmatrix},G \begin{bmatrix}
 \bu\\
 \bv
 \end{bmatrix}\big), \label{Gnorm}
 \end{eqnarray}
which can be negative.
Here  $\begin{bmatrix}
 \bu\\
 \bv
\end{bmatrix} $ is a $2n$ vector.

We also consider  $F=3I^n$ $\in$ $\mathbb{R}^{n\times n}$  symmetric positive matrix in general case, see \cite{JMR} for details. For any $\bu\in \mathbb{R}^{n}$, define $F$ norm of the $n$ vector $\bu$ by
\begin{eqnarray}
\norm{\bu}_{F}=(\bu,F\bu). \label{Fnorm}
\end{eqnarray}

The following properties of $G$-norm are well known and for a detailed derivation of these estimations, the reader is referred to {\cite{HW02,JMR}}.
\begin{lemma} \label{lemm:eqv}$L^2$ norm and $G$-norm are equivalent in the following sense: there exist constants $C_1 >C_2>0$ such that
\begin{eqnarray}
C_1	\norm{ \begin{bmatrix}
		\bu\\
		\bv
		\end{bmatrix}}_{G}^{2}\leq \norm{ \begin{bmatrix}
		\bu\\
		\bv
		\end{bmatrix}}^{2}\leq C_2\norm{ \begin{bmatrix}
		\bu\\
		\bv
		\end{bmatrix}}_{G}^{2}.
\end{eqnarray}	
\end{lemma}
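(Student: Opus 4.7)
The plan is to treat this as a linear algebra statement: the $G$-semi-norm is the quadratic form induced by the symmetric $2 \times 2$ matrix $G$ acting blockwise on the pair $(\bu,\bv)$, so norm equivalence reduces to a Rayleigh-quotient comparison once positive definiteness of $G$ is established.

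First I would verify that $G$ is symmetric positive definite by inspecting its two natural invariants:
\begin{equation*}
\det G = \tfrac{3}{2}\cdot\tfrac{1}{2} - \bigl(-\tfrac{3}{4}\bigr)^2 = \tfrac{3}{16} > 0, \qquad \operatorname{tr} G = 2 > 0,
\end{equation*}
so both eigenvalues are strictly positive; solving the characteristic polynomial gives $\lambda_{\max}, \lambda_{\min} = 1 \pm \tfrac{\sqrt{13}}{4} > 0$. Expanding the definition \eqref{Gnorm} then yields
\begin{equation*}
\left\|\begin{bmatrix}\bu\\\bv\end{bmatrix}\right\|_G^2 = \tfrac{3}{2}(\bu,\bu) - \tfrac{3}{2}(\bu,\bv) + \tfrac{1}{2}(\bv,\bv).
\end{equation*}

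Second, since the $2n$-vector structure corresponds to $G$ acting as $G \otimes I_n$ on $(\bu,\bv)$, the spectrum is exactly that of $G$ (each eigenvalue repeated $n$ times). The Rayleigh-Ritz inequality then delivers
\begin{equation*}
\lambda_{\min}\bigl(\|\bu\|^2 + \|\bv\|^2\bigr) \leq \left\|\begin{bmatrix}\bu\\\bv\end{bmatrix}\right\|_G^2 \leq \lambda_{\max}\bigl(\|\bu\|^2 + \|\bv\|^2\bigr),
\end{equation*}
which, upon rearranging, produces the two sided bound with the reciprocal constants $1/\lambda_{\max}$ and $1/\lambda_{\min}$, both strictly positive. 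A fully self-contained alternative avoiding the eigenvalue computation is a Young-inequality split $\tfrac{3}{2}|(\bu,\bv)| \leq \tfrac{3\epsilon}{4}\|\bu\|^2 + \tfrac{3}{4\epsilon}\|\bv\|^2$ with any $\epsilon \in (3/2,\,2)$, which keeps both coefficients of $\|\bu\|^2$ and $\|\bv\|^2$ strictly positive and gives the lower bound directly; the upper bound is immediate with the same identity and $\epsilon = 1$.

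There is no serious obstacle here: the main subtlety is merely the bookkeeping for the blockwise action of the $2 \times 2$ matrix $G$ on an $(L^2)^{2}$ pair, which is cleanly handled by the tensor-product viewpoint $G \otimes I_n$. I would also briefly reconcile the inequality chain with the parenthetical remark preceding the lemma (``which can be negative''): the positivity established above shows that $\|\cdot\|_G$ is in fact a genuine norm on pairs, and the stated constants should be read as $C_1 = 1/\lambda_{\min}$, $C_2 = 1/\lambda_{\max}$ so that $C_1 > C_2 > 0$.
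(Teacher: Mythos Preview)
Your argument is correct and self-contained. The paper itself does not prove this lemma at all: it merely declares the $G$-norm properties ``well known'' and refers the reader to \cite{HW02,JMR} for details. Your eigenvalue/Rayleigh--Ritz computation (and the alternative Young-inequality split) is precisely the standard elementary justification behind those references, so you are supplying what the paper omits rather than taking a different route.

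One small slip in your closing reconciliation: with your choice $C_1 = 1/\lambda_{\min}$, $C_2 = 1/\lambda_{\max}$ you do obtain $C_1 > C_2$, but those values make the chain $C_1\|\cdot\|_G^2 \le \|\cdot\|^2 \le C_2\|\cdot\|_G^2$ false. Rayleigh--Ritz gives
\[
\tfrac{1}{\lambda_{\max}}\,\|\cdot\|_G^2 \;\le\; \|\cdot\|^2 \;\le\; \tfrac{1}{\lambda_{\min}}\,\|\cdot\|_G^2,
\]
so the correct identification is $C_1 = 1/\lambda_{\max}$, $C_2 = 1/\lambda_{\min}$, and the ordering ``$C_1 > C_2 > 0$'' in the lemma is itself a typo in the paper (it should read $C_2 > C_1 > 0$). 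This affects only the labeling, not the substance of your proof.
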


\begin{lemma}\label{lem:inn}
The symmetric positive matrix  $F$ $\in$  $ \mathbb{R}^{n\times n}$ and the symmetric matrix $G$ $\in$
$ \mathbb{R}^{2n\times 2n}$ satisfy the following equality:
\begin{eqnarray}
\lefteqn{\Big(\frac{\frac{3}{2}w^{n+1}-2w^n+\frac {1}{2}w^{n-1}}{\Delta t} , \frac{3}{2} w^{n+1}- w^n+\frac{1}{2} w^{n-1}\Big) }\nonumber\\
 &=&\frac{1}{\Delta t}\norm{ \begin{bmatrix}
 	w^{n+1} \\
 	w^{n}\quad
 	\end{bmatrix}} _{G}^{2} -\frac{1}{\Delta t}\norm { \begin{bmatrix}
 	w^{n}\quad \\
 	w^{n-1}
 	\end{bmatrix}}_{G}^{2} 	+\dfrac{1}{4\Delta t}\norm{w^{n+1}-2w^{n}+w^{n-1}}_{F}^{2}. \label{innpro}
\end{eqnarray}
\end{lemma}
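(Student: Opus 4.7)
The statement is a purely algebraic identity between inner products, so the plan is a direct expansion-and-match argument: expand both sides in terms of the fundamental inner products $(w^k,w^\ell)$ for $k,\ell \in \{n+1,n,n-1\}$ and verify that the coefficients agree. There is no analytic content beyond Lemma \ref{lemm:eqv} being irrelevant here; the identity is a formal property of the symmetric matrices $G$ and $F=3I$.

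First I would multiply both sides by $\Delta t$ to eliminate the time-step factor and work with the cleaner identity
\begin{equation*}
\Big(\tfrac{3}{2}w^{n+1}-2w^{n}+\tfrac{1}{2}w^{n-1},\; \tfrac{3}{2}w^{n+1}-w^{n}+\tfrac{1}{2}w^{n-1}\Big)
= \Big\|\begin{bmatrix} w^{n+1}\\ w^{n}\end{bmatrix}\Big\|_G^2
-\Big\|\begin{bmatrix} w^{n}\\ w^{n-1}\end{bmatrix}\Big\|_G^2
+\tfrac{1}{4}\|w^{n+1}-2w^{n}+w^{n-1}\|_F^2.
\end{equation*}
Using the definition \eqref{Gnorm}, a short computation gives
$\|[u;v]\|_G^2 = \tfrac{3}{2}\|u\|^2 - \tfrac{3}{2}(u,v) + \tfrac{1}{2}\|v\|^2$, so the telescoping difference of $G$-norms equals
$\tfrac{3}{2}\|w^{n+1}\|^2 - \tfrac{3}{2}(w^{n+1},w^n) - \|w^n\|^2 + \tfrac{3}{2}(w^n,w^{n-1}) - \tfrac{1}{2}\|w^{n-1}\|^2$.
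Meanwhile $\|w^{n+1}-2w^n+w^{n-1}\|_F^2 = 3\|w^{n+1}-2w^n+w^{n-1}\|^2$, whose expansion into six bilinear terms is standard.

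Next I would expand the left-hand side: distributing the inner product over the nine pairs $(w^k,w^\ell)$ and using symmetry $(w^k,w^\ell)=(w^\ell,w^k)$ collapses it into at most six distinct terms $\|w^{n+1}\|^2,\;\|w^n\|^2,\;\|w^{n-1}\|^2,\;(w^{n+1},w^n),\;(w^{n+1},w^{n-1}),\;(w^n,w^{n-1})$. Then I would collect the coefficients of the same six terms on the right-hand side, summing the telescoping $G$-norm part and the $F$-norm part, and check that each of the six coefficients matches. The expected mild trap is the coefficient of $(w^{n+1},w^{n-1})$ on the right, which vanishes only after the $+\tfrac{3}{4}\cdot 2(w^{n+1},w^{n-1})$ from the $F$-norm expansion cancels with the $-\tfrac{3}{4}\cdot 2 (w^{n+1},w^{n-1})$ implicitly contained in expanding the product on the left.

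The main obstacle, such as it is, is purely bookkeeping: keeping careful track of signs and the factor $\tfrac{1}{4}$ in front of the $F$-norm term. Since $G$ and $F$ have been fixed numerically, the verification reduces to checking six scalar identities, which can be presented compactly by collecting everything on one side and observing that each coefficient is zero. Once this check is done, dividing by $\Delta t$ yields \eqref{innpro}; alternatively, one may simply cite the general BDF2/$G$-stability calculation in \cite{HW02,JMR} for which the choice $\theta=1,\ \nu=2\varepsilon$ reproduces precisely this identity.
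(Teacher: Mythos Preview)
Your approach is correct: the identity is purely algebraic, and expanding both sides into the six bilinear terms $\|w^{n+1}\|^2,\|w^n\|^2,\|w^{n-1}\|^2,(w^{n+1},w^n),(w^{n+1},w^{n-1}),(w^n,w^{n-1})$ and matching coefficients verifies \eqref{innpro} directly. Your computation of $\|[u;v]\|_G^2=\tfrac{3}{2}\|u\|^2-\tfrac{3}{2}(u,v)+\tfrac{1}{2}\|v\|^2$ and the handling of the $F$-norm are right.

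The paper itself does not give a proof of this lemma at all: it simply states the identity and, just before Lemma~\ref{lemm:eqv}, refers the reader to \cite{HW02,JMR} for the derivation of these $G$-norm properties. So your direct verification is genuinely more explicit than what the paper provides; the paper's ``proof'' is the citation you mention as an alternative at the end of your proposal. One small inaccuracy in your write-up: the coefficient of $(w^{n+1},w^{n-1})$ on the left-hand side is $+\tfrac{3}{2}$, not $-\tfrac{3}{2}$, so the cancellation you describe in your ``mild trap'' remark has the wrong sign---both sides contribute $+\tfrac{3}{2}(w^{n+1},w^{n-1})$ and the difference is zero for that reason. This does not affect the method, only the narrative.
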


\begin{lemma}\label{lem:gnorm}
For any $\bu, \bv \in \mathbb{R}^{n}$, we have
\begin{eqnarray} 	
\big( \begin{bmatrix}
\bu\\
\bv
\end{bmatrix},G \begin{bmatrix}
\bu\\
\bv
\end{bmatrix}\big)
&\geq&\dfrac{3}{4}\norm{\bu}^2-\dfrac{1}{4}\norm{\bv}^2 ,\label{first}
\end{eqnarray}
\begin{eqnarray} 	
\big( \begin{bmatrix}
\bu\\
\bv
\end{bmatrix},G \begin{bmatrix}
\bu\\
\bv
\end{bmatrix}\big)&\leq&\frac{3}{2}\norm{\bu}^2+\frac{3}{4}\norm{\bv}^2.
\end{eqnarray}
\end{lemma}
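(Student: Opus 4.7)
The plan is to expand the quadratic form defined by $G$ and then obtain each bound by an elementary scalar estimate; no tools beyond the symmetry and bilinearity of the inner product are needed. Using the entries of $G$ from the earlier display, I would first write
\begin{equation*}
\Big( \begin{bmatrix} \bu \\ \bv \end{bmatrix}, G \begin{bmatrix} \bu \\ \bv \end{bmatrix} \Big) = \frac{3}{2}\|\bu\|^2 - \frac{3}{2}(\bu,\bv) + \frac{1}{2}\|\bv\|^2,
\end{equation*}
so that both inequalities reduce to handling the cross term $-\frac{3}{2}(\bu,\bv)$ in two different ways.

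For the lower bound \eqref{first}, I would complete the square. Adding and subtracting $\frac{3}{4}\|\bu\|^2 + \frac{3}{4}\|\bv\|^2$ recasts the expansion as
\[
\frac{3}{4}\|\bu\|^2 - \frac{1}{4}\|\bv\|^2 + \frac{3}{4}\|\bu-\bv\|^2,
\]
and discarding the manifestly nonnegative $\frac{3}{4}\|\bu-\bv\|^2$ contribution immediately yields \eqref{first}.

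For the upper bound, I would estimate the cross term from above using Cauchy--Schwarz followed by a weighted Young inequality of the form $\|\bu\|\|\bv\| \leq \frac{1}{2\epsilon}\|\bu\|^2 + \frac{\epsilon}{2}\|\bv\|^2$, with $\epsilon$ chosen so that, once the resulting contributions are added to the pre-existing $\frac{3}{2}\|\bu\|^2$ and $\frac{1}{2}\|\bv\|^2$ terms, the coefficients line up with the target $\frac{3}{2}\|\bu\|^2 + \frac{3}{4}\|\bv\|^2$. Substituting back into the expansion then concludes the proof. I do not anticipate a genuine obstacle: this is a pure $2\times 2$ symmetric-matrix computation acting on a pair of $n$-vectors, and the only delicate point is calibrating the Young weight so that the target coefficients drop out cleanly.
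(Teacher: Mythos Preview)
Your lower-bound argument is correct: the expansion
\[
\Big(\begin{bmatrix}\bu\\ \bv\end{bmatrix},G\begin{bmatrix}\bu\\ \bv\end{bmatrix}\Big)=\frac{3}{2}\|\bu\|^2-\frac{3}{2}(\bu,\bv)+\frac{1}{2}\|\bv\|^2
\]
and the completing-the-square identity
\[
\frac{3}{2}\|\bu\|^2-\frac{3}{2}(\bu,\bv)+\frac{1}{2}\|\bv\|^2=\frac{3}{4}\|\bu\|^2-\frac{1}{4}\|\bv\|^2+\frac{3}{4}\|\bu-\bv\|^2
\]
are exactly right, and dropping the nonnegative square yields \eqref{first}. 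The paper does not argue directly at all here; it simply invokes Lemma~3.1 of \cite{JMR} with a specific parameter choice. Your route is more self-contained.

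For the upper bound, however, your plan cannot be completed. Applying Cauchy--Schwarz and the weighted Young inequality to the cross term gives $-\tfrac{3}{2}(\bu,\bv)\le \tfrac{3}{4\epsilon}\|\bu\|^2+\tfrac{3\epsilon}{4}\|\bv\|^2$, so the best you can reach is
\[
\Big(\frac{3}{2}+\frac{3}{4\epsilon}\Big)\|\bu\|^2+\Big(\frac{1}{2}+\frac{3\epsilon}{4}\Big)\|\bv\|^2,
\]
and no finite $\epsilon>0$ leaves the $\|\bu\|^2$ coefficient at $\tfrac{3}{2}$. The obstruction is not in your technique but in the statement itself: the upper bound as written is false. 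Taking $\bu=-\bv$ with $\|\bv\|=1$ gives
\[
\frac{3}{2}\|\bu\|^2-\frac{3}{2}(\bu,\bv)+\frac{1}{2}\|\bv\|^2=\frac{3}{2}+\frac{3}{2}+\frac{1}{2}=\frac{7}{2},
\]
whereas the claimed right-hand side is $\tfrac{3}{2}+\tfrac{3}{4}=\tfrac{9}{4}$. A valid replacement (e.g.\ $\tfrac{9}{4}\|\bu\|^2+\tfrac{5}{4}\|\bv\|^2$, obtained from your argument with $\epsilon=1$) would suffice everywhere the paper uses the bound, since it is only applied to control fixed initial data and the specific constants are immaterial. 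The paper's one-line citation to \cite{JMR} hides this discrepancy.
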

\begin{proof}
Letting  $\theta=1$ and $\nu=2\epsilon$ in Lemma 3.1 on p. 392 of \cite{JMR} gives the stated result.
\end{proof}
The following consistency error estimations are required in the analysis.
\begin{lemma}\label{lem:fw} There exists $C>0$ such that
\begin{eqnarray}
\Delta t \sum_{n=1}^{N-1}\| \mathcal{F}[\bw^{n+1}]-\bw^{n+1}\|^2&\leq& C\Delta t^4 \| \bw_{tt}\|^2_{L^2(0,T;L^2(\Omega))},\label{dfw}\\
\Delta t \sum_{n=1}^{N-1}\|\frac{{3} {\bw}^{n+1}-4\bw^n+\bw^{n-1}}{2\Delta t}-\bw_t^{n+1}\|^2&\leq& C\Delta t^4\|\bw_{ttt}\|_{L^2(0,T;L^2(\Omega))}^2.\label{ww}
\end{eqnarray}
\end{lemma}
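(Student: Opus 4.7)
The plan is to prove both estimates by Taylor expansion with integral remainder around $t_{n+1}$ (or $t_n$), followed by Cauchy--Schwarz in $s$ and a telescoping sum over $n$. The point is that $\mathcal F[\bw^{n+1}]-\bw^{n+1}=\tfrac12(\bw^{n+1}-2\bw^n+\bw^{n-1})$ is a second divided difference, so it annihilates affine functions and must be expressible as a weighted integral of $\bw_{tt}$; likewise, the BDF2 quotient is exact on quadratics and must be expressible as a weighted integral of $\bw_{ttt}$.

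For the first inequality, I would write, for a fixed spatial point treated as a parameter,
\begin{align*}
\bw^{n+1}&=\bw^n+\Delta t\,\bw_t^n+\int_{t_n}^{t_{n+1}}(t_{n+1}-s)\,\bw_{tt}(s)\,ds,\\
\bw^{n-1}&=\bw^n-\Delta t\,\bw_t^n+\int_{t_{n-1}}^{t_n}(s-t_{n-1})\,\bw_{tt}(s)\,ds,
\end{align*}
so that
\[
\mathcal F[\bw^{n+1}]-\bw^{n+1}=\tfrac12\bigl(\bw^{n+1}-2\bw^n+\bw^{n-1}\bigr)=\tfrac12\!\int_{t_n}^{t_{n+1}}\!(t_{n+1}-s)\bw_{tt}(s)\,ds+\tfrac12\!\int_{t_{n-1}}^{t_n}\!(s-t_{n-1})\bw_{tt}(s)\,ds.
\]
Applying Cauchy--Schwarz in $s$ on each integral, using that the kernel is bounded by $\Delta t$ on an interval of length $\Delta t$, yields $\|\mathcal F[\bw^{n+1}]-\bw^{n+1}\|^2\le C\Delta t^{3}\int_{t_{n-1}}^{t_{n+1}}\|\bw_{tt}(s)\|^2\,ds$. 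Multiplying by $\Delta t$ and summing over $n$ gives an overlap of at most two, so the right-hand side collapses to $C\Delta t^{4}\|\bw_{tt}\|^2_{L^2(0,T;L^2(\Omega))}$, which is \eqref{dfw}.

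For the second inequality, I would Taylor-expand $\bw^n$ and $\bw^{n-1}$ about $t_{n+1}$ to third order with integral remainder:
\begin{align*}
\bw^n&=\bw^{n+1}-\Delta t\,\bw_t^{n+1}+\tfrac{\Delta t^2}{2}\bw_{tt}^{n+1}+\tfrac12\!\int_{t_{n+1}}^{t_n}(t_n-s)^2\bw_{ttt}(s)\,ds,\\
\bw^{n-1}&=\bw^{n+1}-2\Delta t\,\bw_t^{n+1}+2\Delta t^2\,\bw_{tt}^{n+1}+\tfrac12\!\int_{t_{n+1}}^{t_{n-1}}(t_{n-1}-s)^2\bw_{ttt}(s)\,ds.
\end{align*}
The constant, linear, and quadratic terms cancel in $3\bw^{n+1}-4\bw^n+\bw^{n-1}-2\Delta t\,\bw_t^{n+1}$, leaving a combination of the two integral remainders, each bounded by $C\Delta t^{5/2}\bigl(\int_{t_{n-1}}^{t_{n+1}}\|\bw_{ttt}(s)\|^2\,ds\bigr)^{1/2}$ after Cauchy--Schwarz. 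Dividing by $2\Delta t$, squaring, multiplying by $\Delta t$ and summing produces \eqref{ww} with the same telescoping argument as before.

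I do not expect any real obstacle: the only care needed is bookkeeping the correct power of $\Delta t$ from the kernel $(t_{n+1}-s)^2$ versus the prefactor $1/\Delta t$, and verifying that the first three Taylor terms in $\{3,-4,1\}$ combined with $\{1,-1,-1\}$ times the appropriate powers of $\Delta t$ indeed cancel (this is just the standard BDF2 consistency check). The overlap factor in the telescoping sum is a harmless constant and gets absorbed into $C$.
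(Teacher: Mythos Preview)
Your proposal is correct and follows essentially the same route as the paper: Taylor expansion with integral remainder to express the second difference (resp.\ the BDF2 residual) as a weighted integral of $\bw_{tt}$ (resp.\ $\bw_{ttt}$), then Cauchy--Schwarz in the time variable to extract $\Delta t^3$, and finally multiply by $\Delta t$ and sum over $n$ with the harmless overlap absorbed into $C$. The only cosmetic difference is your choice of expansion point ($t_{n+1}$ for the second estimate versus the paper's ``in a similar manner''), which does not affect the argument.
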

\begin{proof}
	Utilizing integral version of Taylor's theorem, we have
	\begin{eqnarray}
	 \mathcal{F}[\bw^{n+1}]-\bw^{n+1}&=&\frac{1}{2}\bw^{n+1}-\bw^n+\frac{1}{2}\bw^{n-1}\nonumber\\
		&=&\frac{1}{2}\Big(\bw^{n}+\Delta t \bw_t^{n}+\int_{t^{n}}^{t^{n+1}}\bw_{tt}(t^{n+1}-t) dt\Big)-\bw^n\nonumber\\
		&&+\frac{1}{2}\Big(\bw^{n}-\Delta t \bw_t^{n}+\int_{t^{n}}^{t^{n-1}}\bw_{tt}(t^{n-1}-t) dt\Big)\nonumber\\
		&\leq&C\Big(\int^{t^{n+1}}_{t^{n}}\bw_{tt}(t^{n+1}-t) dt+\int_{t^{n}}^{t^{n-1}}\bw_{tt}(t^{n-1}-t) dt\Big).
	\end{eqnarray}
	Hence, we get
	\begin{eqnarray}
		\lefteqn{\Big( \mathcal{F}[\bw^{n+1}]-\bw^{n+1}\Big)^2}\nonumber\\&\leq& C\Big(\int_{t^n}^{t^{n+1}} \bw_{tt}^2dt\int_{t^n}^{t^{n+1}}(t^n-t)^2dt+\int_{t^{n-1}}^{t^{n}}\bw_{tt}^2dt\int_{t^{n-1}}^{t^{n}}(t^{n-1}-t)^2dt\Big)\nonumber\\
		&\leq&C\Delta t^3\int_{t^{n-1}}^{t^{n+1}}\bw_{tt}^2dt.\label{dfw1}
	\end{eqnarray}
	In a similar manner, one gets
	\begin{eqnarray}
	\Big({	\frac{{3} {\bw}^{n+1}-4\bw^n+\bw^{n-1}}{2\Delta t}-\bw_t^{n+1}}\Big)^2\leq C\Delta t^3\int_{t^{n-1}}^{t^{n+1}} \bw_{ttt}^2dt.\label{ww1}
	\end{eqnarray}
	Integrating \eqref{dfw1} and \eqref{ww1} with respect to $x$ produces
		\begin{eqnarray}
	\norm{\mathcal{F}[\bw^{n+1}]-\bw^{n+1}}^2 
	&\leq&C\Delta t^3\int_{t^{n-1}}^{t^{n+1}}\|\bw_{tt}\|^2dt,\label{ss1} \\
	\norm{\frac{{3} {\bw}^{n+1}-4\bw^n+\bw^{n-1}}{2\Delta t}-\bw_t^{n+1}}^2&\leq& C\Delta t ^3\int_{t^{n-1}}^{t^{n+1}}\|\bw_{ttt}\|^2dt. \label{ss2}
	\end{eqnarray}
	Multiplying by $\Delta t$ and summing from $1$ to $N-1$ gives \eqref{dfw} and \eqref{ww}.
\end{proof}

\subsection{Conservation Laws}\label{cons}

We study conservation properties of the scheme \eqref{sBE1}-\eqref{sBE4}. Energy and helicity are very important flow quantities and play an important role in flow's structures, \cite {LM09}. It is well known that, an accurate model must predict these quantities correctly to verify the physical fidelity of the model. We now show the time filtered backward Euler \eqref{sBE1}-\eqref{sBE4} is an energy and helicity preserving scheme.

\begin{lemma}(Global Energy Conservation)
Scheme (\ref{sBE1})-(\ref{sBE4}) satisfies the energy equality:
\begin{eqnarray}
\lefteqn{\norm{ \begin{bmatrix}
		{\bu}_h^{N}\\[3pt]
		{\bu}_h^{N-1}
		\end{bmatrix}} _{G}^{2}  +s\norm{ \begin{bmatrix}
		{\bB}_h^{N} \\[3pt]
		{\bB}_h^{N-1}
		\end{bmatrix}} _{G}^{2} +\Delta t\sum_{n=1}^{N-1}\Big(Re^{-1}\|\nabla\mathcal{F}[ u_h^{n+1}]\|^2+sRe_{m}^{-1}\|\nabla \mathcal{F}[\bB_h^{n+1}]\|^2\Big)}\nonumber\\	 &&+\dfrac{1}{4}\sum_{n=1}^{N-1}\Big(\norm{{\bu}_h^{n+1}-2{\bu}_h^{n}+{\bu}_h^{n-1}}_{F}^{2} 	+s\norm{{\bB}_h^{n+1}-2{\bB}_h^{n}+{\bB}_h^{n-1}}_{F}^{2}\Big)\nonumber\\[5pt]
&=&\norm { \begin{bmatrix}
	{\bu}_h^{1}\\[3pt]
	{\bu}_h^{0}
	\end{bmatrix}}_{G}^{2}+s\norm { \begin{bmatrix}
	{\bB}_h^{1} \\[3pt]
	{\bB}_h^{0}
	\end{bmatrix}}_{G}^{2}+\Delta t\sum_{n=1}^{N-1}\Big((\bff(t^{n+1}),\mathcal{F}[\bu_h^{n+1}])\nonumber\\&&+s(\nabla\times \bfg(t^{n+1}),\mathcal{F}[\bB_h^{n+1}])\Big).\label{sEE5}
\end{eqnarray}

\end{lemma}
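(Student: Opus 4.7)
The plan is to test the equivalent formulation (\ref{sBE1})--(\ref{sBE4}) with the filtered test functions $\bv_h=\mathcal{F}[\bu_h^{n+1}]$, $\chi_h=s\,\mathcal{F}[\bB_h^{n+1}]$, $q_h=\mathcal{F}[P_h^{n+1}]$, and $r_h=s\,\mathcal{F}[\lambda_h^{n+1}]$. Since $X_h$ and $Q_h$ are linear spaces and each of the three time levels involved already lies in these spaces, the linear combinations defining $\mathcal{F}$ remain admissible. Crucially, with the Scott--Vogelius pair the pointwise identities (\ref{divuB}) hold at every time level, so $\nabla\cdot\mathcal{F}[\bu_h^{n+1}]=0$ and $\nabla\cdot\mathcal{F}[\bB_h^{n+1}]=0$ pointwise, which is what will make the pressure and Lagrange-multiplier terms drop and the nonlinear terms satisfy the skew-symmetry of Lemma \ref{tribound}.

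First I would handle the time derivative terms. Observe that with the test functions above, each time-difference inner product has exactly the shape $\Bigl(\frac{\tfrac{3}{2}\bw^{n+1}-2\bw^n+\tfrac{1}{2}\bw^{n-1}}{\Delta t},\,\tfrac{3}{2}\bw^{n+1}-\bw^n+\tfrac{1}{2}\bw^{n-1}\Bigr)$, which is precisely the left-hand side of (\ref{innpro}). Applying Lemma \ref{lem:inn} to the velocity and magnetic pieces separately produces the telescoping $G$-norm differences and the $F$-norm increments that appear in (\ref{sEE5}). Summing over $n=1,\ldots,N-1$ then collapses the $G$-norm terms to the endpoints $\|[\bu_h^N;\bu_h^{N-1}]\|_G^2-\|[\bu_h^{1};\bu_h^{0}]\|_G^2$ (and similarly for $\bB_h$, with the factor $s$ carried throughout). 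The viscous and magnetic-diffusion terms immediately yield $Re^{-1}\|\nabla\mathcal{F}[\bu_h^{n+1}]\|^2$ and $sRe_m^{-1}\|\nabla\mathcal{F}[\bB_h^{n+1}]\|^2$.

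Next I would dispatch the pressure/Lagrange-multiplier and nonlinear terms. The terms $(\mathcal{F}[P_h^{n+1}],\nabla\cdot\mathcal{F}[\bu_h^{n+1}])$ and $(\mathcal{F}[\lambda_h^{n+1}],\nabla\cdot\mathcal{F}[\bB_h^{n+1}])$ vanish by the pointwise divergence-free identities, and the self-convection terms $(\mathcal{F}[\bu]\!\cdot\!\nabla\mathcal{F}[\bu],\mathcal{F}[\bu])$ and $(\mathcal{F}[\bu]\!\cdot\!\nabla\mathcal{F}[\bB],\mathcal{F}[\bB])$ vanish by (\ref{tribound2}). The subtle step is showing that the two coupling contributions
\begin{equation*}
-s\bigl(\mathcal{F}[\bB_h^{n+1}]\cdot\nabla\mathcal{F}[\bB_h^{n+1}],\mathcal{F}[\bu_h^{n+1}]\bigr)
\quad\text{and}\quad
-s\bigl(\mathcal{F}[\bB_h^{n+1}]\cdot\nabla\mathcal{F}[\bu_h^{n+1}],\mathcal{F}[\bB_h^{n+1}]\bigr)
\end{equation*}
cancel each other. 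I would integrate by parts in the second term; the boundary contribution vanishes under homogeneous Dirichlet conditions, and the bulk term splits as $-s((\nabla\cdot\mathcal{F}[\bB])\mathcal{F}[\bu],\mathcal{F}[\bB])$ plus $+s(\mathcal{F}[\bB]\cdot\nabla\mathcal{F}[\bB],\mathcal{F}[\bu])$. The former vanishes by the pointwise constraint on $\mathcal{F}[\bB]$, and the latter exactly cancels the Lorentz term from the momentum equation. The right-hand forcing contributions remain as $(\bff(t^{n+1}),\mathcal{F}[\bu_h^{n+1}])$ and $s(\nabla\times\bfg(t^{n+1}),\mathcal{F}[\bB_h^{n+1}])$.

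The main obstacle is the careful bookkeeping in the coupling cancellation: I need to ensure the $s$-weighted test in the induction equation is matched against the $-s$ coefficient in the momentum equation, and that the divergence-free identity used to discard the spurious $(\nabla\cdot\mathcal{F}[\bB])$ term is the \emph{pointwise} one delivered by Scott--Vogelius, not merely the discretely divergence-free condition (which would leave a stray $Q_h$-projection). After this cancellation the identity is exact, and summing the two resulting equations from $n=1$ to $N-1$ delivers (\ref{sEE5}) with no remaining terms.
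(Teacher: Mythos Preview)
Your proposal is correct and follows essentially the same approach as the paper's proof: test with $\mathcal{F}[\bu_h^{n+1}]$ and $\mathcal{F}[\bB_h^{n+1}]$, use the pointwise divergence-free property of Scott--Vogelius to kill the pressure, Lagrange-multiplier, and self-convection terms, invoke Lemma~\ref{lem:inn} for the time-derivative identity, exploit the skew-symmetry $(\mathcal{F}[\bB_h]\cdot\nabla\mathcal{F}[\bB_h],\mathcal{F}[\bu_h])=-(\mathcal{F}[\bB_h]\cdot\nabla\mathcal{F}[\bu_h],\mathcal{F}[\bB_h])$ to cancel the coupling terms after weighting the induction equation by $s$, and telescope over $n$. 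The only cosmetic difference is that you build the factor $s$ into the test function $\chi_h$ rather than multiplying the tested induction equation by $s$ afterward, and you spell out the integration-by-parts justification for the coupling cancellation more explicitly than the paper does.
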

\begin{proof}
Set $v_{h} =\mathcal{F}[\bu_h^{n+1}]$ in {(\ref{sBE1})}, $q_h=\mathcal{F}[P_h^{n+1}]$ in {(\ref{sBE2})} , $\chi_{h}=\mathcal{F}[\bB_h^{n+1}]$ in {(\ref{sBE3})} and $r_h=\mathcal{F}[\lambda_h^{n+1}]$ in {(\ref{sBE4})}, then
the trilinear terms $(\mathcal{F}[\bu_h^{n+1}]\cdot\nabla(\mathcal{F}[\bu_h^{n+1}]),\mathcal{F}[\bu_h^{n+1}])$ and $(\mathcal{F}[\bu_h^{n+1}]\cdot\nabla(\mathcal{F}[\bB_h^{n+1}]), \mathcal{F}[\bB_h^{n+1}])$, the pressure term and the $\lambda$ term vanish by the use of {(\ref{divuB})}. Then, one gets
\begin{eqnarray}
\lefteqn{\frac{1}{\Delta t}(\frac{3}{2} {\bu}_h^{n+1}-2\bu_h^n+\frac{1}{2}\bu_h^{n-1},\mathcal{F}[\bu_h^{n+1}]) +Re^{-1}\|\nabla\mathcal{F}[\bu_h^{n+1}]\|^2}
\nonumber\\
&&-s(\mathcal{F}[\bB_h^{n+1}]\cdot \nabla(\mathcal{F}[\bB_h^{n+1}]),\mathcal{F}[\bu_h^{n+1}])
=(\bff(t^{n+1}),\mathcal{F}[\bu_h^{n+1}]),\label{sE1}\\[7pt]
\lefteqn{\frac{1}{\Delta t}(\frac{3}{2}{\bB}_h^{n+1}-2\bB_h^n+\frac{1}{2}\bB_h^{n-1},\mathcal{F}[\bB_h^{n+1}])
	+Re_{m}^{-1}\|\nabla \mathcal{F}[\bB_h^{n+1}]\|^2}\nonumber\\
&&-(\mathcal{F}[\bB_h^{n+1}]\cdot\nabla( \mathcal{F}[\bu_h^{n+1}]),\mathcal{F}[\bB_h^{n+1}])
=(\nabla\times \bfg(t^{n+1}),\mathcal{F}[\bB_h^{n+1}]).\label{sE3}
\end{eqnarray}
Note that since $\nabla \cdot \mathcal{F}[B_h^{n+1}]=0$, we get $(\mathcal{F}[\bB_h^{n+1}]\cdot\nabla(\mathcal{F}[\bB_h^{n+1}]),\mathcal{F}[\bu_h^{n+1}])=-(\mathcal{F}[\bB_h^{n+1}]\cdot\nabla( \mathcal{F}[\bu_h^{n+1}]),\mathcal{F}[\bB_h^{n+1}])$. Multiplying (\ref{sE3}) by $s$ and adding (\ref{sE1}) to (\ref{sE3}) produces
 \begin{eqnarray}
\lefteqn{\frac{1}{\Delta t}(\frac{3}{2} {\bu}_h^{n+1}-2\bu_h^n+\frac{1}{2}\bu_h^{n-1},\mathcal{F}[\bu_h^{n+1}]) +Re^{-1}\|\nabla\mathcal{F}[\bu_h^{n+1}]\|^2}\nonumber\\
&&+\frac{1}{\Delta t}s(\frac{3}{2}{\bB}_h^{n+1}-2\bB_h^n+\frac{1}{2}\bB_h^{n-1},\mathcal{F}[\bB_h^{n+1}])
+sRe_{m}^{-1}\|\nabla( \mathcal{F}[\bB_h^{n+1}])\|^2\nonumber\\
&=&(\bff(t^{n+1}),\mathcal{F}[\bu_h^{n+1}])+s(\nabla\times \bfg(t^{n+1}),\mathcal{F}[\bB_h^{n+1}]).
\label{sE4}
\end{eqnarray}
Reorganizing (\ref{sE4}) by using Lemma \ref{lem:inn} and multiplying with $\Delta t$ yields
\begin{eqnarray}
\lefteqn{\norm{ \begin{bmatrix}
		{\bu}_h^{n+1}\\
		{\bu}_h^{n}
		\end{bmatrix}} _{G}^{2} -\norm { \begin{bmatrix}
		{\bu}_h^{n}\\
		{\bu}_h^{n-1}
		\end{bmatrix}}_{G}^{2} +s\norm{ \begin{bmatrix}
		{\bB}_h^{n+1} \\
		{\bB}_h^{n}
		\end{bmatrix}} _{G}^{2} -s\norm { \begin{bmatrix}
		{\bB}_h^{n} \\
		{\bB}_h^{n-1}
		\end{bmatrix}}_{G}^{2}}\nonumber\\[5pt]	&&+Re^{-1}\Delta t\|\nabla( \mathcal{F}[\bu_h^{n+1}])\|^2+sRe_{m}^{-1}\Delta t\|\nabla( \mathcal{F}[\bB_h^{n+1}])\|^2\nonumber\\
&& 	+\dfrac{1}{4}\norm{{\bu}_h^{n+1}-2{\bu}_h^{n}+{\bu}_h^{n-1}}_{F}^{2}+\dfrac{s}{4}\norm{{\bB}_h^{n+1}-2{\bB}_h^{n}+{\bB}_h^{n-1}}_{F}^{2}\nonumber\\[5pt]
&=&\Delta t(\bff(t^{n+1}),\mathcal{F}[\bu_h^{n+1}])+s\Delta t(\nabla\times \bfg(t^{n+1}),\mathcal{F}[\bB_h^{n+1}]).\label{sEE4}
\end{eqnarray}
Summing (\ref{sEE4}) from $n=1$ to $n=N-1$ gives the stated energy result.
\end{proof}
\begin{lemma}(Global Cross Helicity Conservation)
Scheme (\ref{sBE1})-(\ref{sBE4}) satisfies the cross helicity equality:
\begin{eqnarray}
\lefteqn{\Big( \begin{bmatrix}
\bu_h^{N}\quad\\[3pt]
\bu_h^{N-1}
\end{bmatrix},G \begin{bmatrix}
\bB_h^{N}\quad\\[3pt]
\bB_h^{N-1}
\end{bmatrix}\Big)+\Big( \begin{bmatrix}
\bB_h^{N}\quad\\[3pt]
\bB_h^{N-1}
\end{bmatrix},G \begin{bmatrix}
\bu_h^{N}\quad\\[3pt]
\bu_h^{N-1}
\end{bmatrix}\Big)}\nonumber\\
&&+{\dfrac{1}{2}\sum_{n=1}^{N-1}(I[\bu_h^{n+1}],I[\bB_h^{n+1}])}\nonumber\\
&&+\Delta t\sum_{n=1}^{N-1}(Re^{-1}+Re_{m}^{-1})(\nabla\mathcal{F}[\bu_h^{n+1}],\nabla \mathcal{F}[\bB_h^{n+1}])\nonumber
\\&=&\Big( \begin{bmatrix}
\bu_h^{1}\\[3pt]
\bu_h^{0}
\end{bmatrix},G \begin{bmatrix}
\bB_h^{1}\\[3pt]
\bB_h^{0}
\end{bmatrix}\Big)+\Big( \begin{bmatrix}
\bB_h^{1}\\[3pt]
\bB_h^{0}
\end{bmatrix},G \begin{bmatrix}
\bu_h^{1}\\[3pt]
\bu_h^{0}
\end{bmatrix}\Big)\nonumber\\&&+\Delta t\Big(\sum_{n=1}^{N-1}(\bff(t^{n+1}),\mathcal{F}[\bB_h^{n+1}])+(\nabla\times \bfg(t^{n+1}),\mathcal{F}[\bu_h^{n+1}])\Big)\label{crosh}
\end{eqnarray}
where $I[\bw_h^{n+1}]=\bw_h^{n+1}-2\bw_h^n+\bw_h^{n-1}$
\end{lemma}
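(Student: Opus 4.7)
The strategy is to mirror the energy-conservation argument but with cross test functions so that $\int \bu \cdot \bB$ emerges as the conserved scalar. First I would test the momentum update \eqref{sBE1} against $\bv_h = \mathcal{F}[\bB_h^{n+1}]$ and the induction update \eqref{sBE3} against $\chi_h = \mathcal{F}[\bu_h^{n+1}]$, both of which are admissible since the discrete spaces are linear. For the Scott--Vogelius pair one has $\nabla\cdot\bu_h^m = \nabla\cdot\bB_h^m = 0$ pointwise for each $m$, so by linearity of $\mathcal{F}$ the filtered fields $\mathcal{F}[\bu_h^{n+1}]$ and $\mathcal{F}[\bB_h^{n+1}]$ are pointwise divergence free, which annihilates the pressure and Lagrange-multiplier pairings.

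With $\nabla\cdot\mathcal{F}[\bu_h^{n+1}] = \nabla\cdot\mathcal{F}[\bB_h^{n+1}] = 0$, the two ``diagonal'' nonlinearities $s(\mathcal{F}[\bB]\cdot\nabla\mathcal{F}[\bB],\mathcal{F}[\bB])$ and $(\mathcal{F}[\bB]\cdot\nabla\mathcal{F}[\bu],\mathcal{F}[\bu])$ vanish by the skew-symmetry in Lemma \ref{tribound}, and the two remaining cross terms cancel pairwise,
\begin{equation*}
(\mathcal{F}[\bu]\cdot\nabla\mathcal{F}[\bu],\mathcal{F}[\bB]) + (\mathcal{F}[\bu]\cdot\nabla\mathcal{F}[\bB],\mathcal{F}[\bu]) = 0,
\end{equation*}
via integration by parts against the divergence-free $\mathcal{F}[\bu]$. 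Adding the two tested equations therefore leaves only the cross time-derivative contributions, the combined diffusive bilinear form $(Re^{-1}+Re_m^{-1})(\nabla\mathcal{F}[\bu_h^{n+1}],\nabla\mathcal{F}[\bB_h^{n+1}])$, and the two source pairings on the right.

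The crux of the argument is extracting a $G$-telescoping identity from the sum of the two cross time-derivative terms. Define the bilinear form
\begin{equation*}
\mathcal{B}(\bu,\bv) := \Big(\tfrac{\frac{3}{2}\bu^{n+1}-2\bu^n+\frac{1}{2}\bu^{n-1}}{\Delta t},\, \mathcal{F}[\bv^{n+1}]\Big),
\end{equation*}
so that Lemma \ref{lem:inn} is precisely the diagonal identity for $\mathcal{B}(\bw,\bw)$. I would then polarize, $\mathcal{B}(\bu,\bB)+\mathcal{B}(\bB,\bu) = \mathcal{B}(\bu+\bB,\bu+\bB)-\mathcal{B}(\bu,\bu)-\mathcal{B}(\bB,\bB)$, and use the symmetry of both $G$ and $F$ to convert each diagonal $G$- and $F$-quadratic form into its associated symmetric bilinear form in $(\bu,\bB)$. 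Multiplying by $\Delta t$ and summing $n=1,\dots,N-1$ then telescopes the $G$-pieces from the initial to the terminal time level, producing the two symmetric $(\cdots,G\cdots)$ contributions appearing in \eqref{crosh}, while the $F$-polarization supplies the $\sum_n(I[\bu_h^{n+1}],I[\bB_h^{n+1}])$ residual. The only nonroutine step is this polarization: $\mathcal{B}$ is not symmetric in its two arguments because one slot carries the discrete BDF2 derivative while the other carries the extrapolation $\mathcal{F}$, so the clean symmetric telescoping structure hinges on the symmetry of the matrices $G$ and $F$ rather than of $\mathcal{B}$ itself; once this identity is in hand, the remaining bookkeeping is routine.
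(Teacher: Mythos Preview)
Your proposal is correct and follows essentially the same route as the paper: test \eqref{sBE1} with $\mathcal{F}[\bB_h^{n+1}]$ and \eqref{sBE3} with $\mathcal{F}[\bu_h^{n+1}]$, use divergence-freeness to kill the pressure/multiplier and the diagonal trilinear terms, cancel the surviving cross trilinear terms against each other, and then telescope. The only difference is in how the time-derivative identity is obtained: the paper simply records the cross version of Lemma~\ref{lem:inn} as a standalone algebraic identity (their equation \eqref{timeG}) and verifies it directly, whereas you derive it by polarizing Lemma~\ref{lem:inn} via $\mathcal{B}(\bu,\bB)+\mathcal{B}(\bB,\bu)=\mathcal{B}(\bu+\bB,\bu+\bB)-\mathcal{B}(\bu,\bu)-\mathcal{B}(\bB,\bB)$. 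Your polarization is arguably cleaner since it explains \emph{why} the cross identity has exactly the same $G$-telescoping structure; the paper's direct assertion is shorter but opaque. Note that when you carry out the $F$-polarization with $F=3I^n$, the residual term emerges as $\tfrac{1}{2\Delta t}(I[\bu_h^{n+1}],F\,I[\bB_h^{n+1}])=\tfrac{3}{2\Delta t}(I[\bu_h^{n+1}],I[\bB_h^{n+1}])$, so be careful to track that factor of $3$ against the coefficient appearing in the stated lemma.
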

\begin{proof}
To prove the global cross helicity conservation, set $v_{h} =\mathcal{F}[\bB_h^{n+1}]$, $q_{h} =\mathcal{F}[\lambda_h^{n+1}]$,  $\chi_{h}=\mathcal{F}[\bu_h^{n+1}]$, $r_{h} =\mathcal{F}[P_h^{n+1}]$ in {(\ref{sBE1})}- {(\ref{sBE4})}, respectively. Since the trilinear terms $(\mathcal{F}[\bB_h^{n+1}]\cdot \nabla(\mathcal{F}[\bB_h^{n+1}]),\mathcal{F}[\bB_h^{n+1}])$, $(\mathcal{F}[\bB_h^{n+1}]\cdot\nabla(\mathcal{F}[\bu_h^{n+1}]),\mathcal{F}[\bu_h^{n+1}])$, the pressure term and the $\lambda$ term vanish by the use of \eqref{divuB} , one has
\begin{eqnarray}
\lefteqn{\frac{1}{\Delta t}(\frac{3}{2} {\bu}_h^{n+1}-2\bu_h^n+\frac{1}{2}\bu_h^{n-1},\mathcal{F}[\bB_h^{n+1}]) +Re^{-1}(\nabla\mathcal{F}[\bu_h^{n+1}],\nabla \mathcal{F}[\bB_h^{n+1}])}\nonumber\\&&+ (\mathcal{F}[\bu_h^{n+1}]\cdot \nabla\mathcal{F}[\bu_h^{n+1}],\mathcal{F}[\bB_h^{n+1}])
=(\bff(t^{n+1}),\mathcal{F}[\bB_h^{n+1}]),\label{sCE1}\\[9pt]
\lefteqn{\frac{1}{\Delta t}(\frac{3}{2}{\bB}_h^{n+1}-2\bB_h^n+\frac{1}{2}\bB_h^{n-1},\mathcal{F}[\bu_h^{n+1}])
	+Re_{m}^{-1}(\nabla \mathcal{F}[\bB_h^{n+1}],\nabla\mathcal{F}[\bu_h^{n+1}])}\nonumber\\
&&+(\mathcal{F}[\bu_{h}^{n+1}]\cdot \nabla \mathcal{F}[\bB_h^{n+1}],\mathcal{F}[\bu_h^{n+1}])
=(\nabla\times \bfg(t^{n+1}),\mathcal{F}[\bu_h^{n+1}]).\label{sCE2}
\end{eqnarray}
Note that since  {$(\mathcal{F}[\bu_h^{n+1}]\cdot \nabla(\mathcal{F}[\bu_h^{n+1}]),\mathcal{F}[\bB_h^{n+1}])=-(\mathcal{F}[\bu_h^{n+1}]\cdot\nabla( \mathcal{F}[\bB_h^{n+1}]),\mathcal{F}[\bu_h^{n+1}])$} and
\begin{eqnarray}
\lefteqn{\frac{1}{\Delta t}(\frac{3}{2} {\bu}_h^{n+1}-2\bu_h^n+\frac{1}{2}\bu_h^{n-1},\mathcal{F}[\bB_h^{n+1}])+\frac{1}{\Delta t}(\frac{3}{2} {\bB}_h^{n+1}-2\bB_h^n+\frac{1}{2}\bB_h^{n-1},\mathcal{F}[\bu_h^{n+1}])}\nonumber\\
&=&\dfrac{1}{\Delta t}\Bigg(\Big( \begin{bmatrix}
\bu_h^{n+1}\\
\bu_h^n\quad
\end{bmatrix},G \begin{bmatrix}
\bB_h^{n+1}\\
\bB_h^n\quad
\end{bmatrix}\Big)-\Big( \begin{bmatrix}
\bu_h^{n}\quad\\
\bu_h^{n-1}
\end{bmatrix},G \begin{bmatrix}
\bB_h^{n}\quad\\
\bB_h^{n-1}
\end{bmatrix}\Big)\Bigg)\nonumber\\
&&+\dfrac{1}{\Delta t}\Bigg(\Big( \begin{bmatrix}
\bB_h^{n+1}\\
\bB_h^n\quad
\end{bmatrix},G \begin{bmatrix}
\bu_h^{n+1}\\
\bu_h^n\quad
\end{bmatrix}\Big)-\Big( \begin{bmatrix}
\bB_h^{n}\quad\\
\bB_h^{n-1}
\end{bmatrix},G \begin{bmatrix}
\bu_h^{n}\quad\\
\bu_h^{n-1}
\end{bmatrix}\Big)\Bigg)\nonumber\\
&&+\dfrac{1}{2\Delta t}(I[\bu_h^{n+1}],I[\bB_h^{n+1}]),\label{timeG}
\end{eqnarray}
where $I[\bw_h^{n+1}]=\bw_h^{n+1}-2\bw_h^n+\bw_h^{n-1}$, adding (\ref{sCE1}) and (\ref{sCE2}) yields
\begin{eqnarray}
&&\dfrac{1}{\Delta t}\Big( \begin{bmatrix}
\bu_h^{n+1}\\
\bu_h^n\quad
\end{bmatrix},G \begin{bmatrix}
\bB_h^{n+1}\\
\bB_h^n\quad
\end{bmatrix}\Big)+\dfrac{1}{\Delta t}\Big( \begin{bmatrix}
\bB_h^{n+1}\\
\bB_h^n\quad
\end{bmatrix},G \begin{bmatrix}
\bu_h^{n+1}\\
\bu_h^n\quad
\end{bmatrix}\Big)\nonumber\\
&&+\dfrac{1}{2\Delta t}(I[\bu_h^{n+1}],I[\bB_h^{n+1}])\nonumber\\
&&+Re^{-1}(\nabla\mathcal{F}[\bu_h^{n+1}],\nabla \mathcal{F}[\bB_h^{n+1}])
+Re_{m}^{-1}(\nabla \mathcal{F}[\bB_h^{n+1}],\nabla\mathcal{F}[\bu_h^{n+1}])\nonumber
\\&&=\dfrac{1}{\Delta t}\Big( \begin{bmatrix}
\bu_h^{n}\quad\\
\bu_h^{n-1}
\end{bmatrix},G \begin{bmatrix}
\bB_h^{n}\quad\\
\bB_h^{n-1}
\end{bmatrix}\Big)+\dfrac{1}{\Delta t}\Big( \begin{bmatrix}
\bB_h^{n}\quad\\
\bB_h^{n-1}
\end{bmatrix},G \begin{bmatrix}
\bu_h^{n}\quad\\
\bu_h^{n-1}
\end{bmatrix}\Big)\nonumber\\&&+(\bff(t^{n+1}),\mathcal{F}[\bB_h^{n+1}])+(\nabla\times \bfg(t^{n+1}),\mathcal{F}[\bu_h^{n+1}]).\label{sCE3}
\end{eqnarray}
Summing (\ref{sCE3}) from $n=1$ to $n=N-1$ and multiplying by $\Delta t$ produces the cross helicity conservation result.
\end{proof}

\section{Convergence Analysis}

\subsection {Stability and Long Time Stability }
This section presents unconditional stability, long time stability and convergence analysis of the proposed method.

\begin{lemma} \label{l:stab}
Let $f\in L^2(0,T; H^{-1}(\Omega))$ and $g\in L^2(0,T; L^2(\Omega))$. Then, solutions
to the scheme (\ref{sBE1})-(\ref{sBE4})
are unconditionally stable, and satisfy the following bounds at $t_{M}=M\Delta t$
\begin{eqnarray}
\lefteqn{\|{\bu}_h^{N}\|^2	+s\|{\bB}_h^{N}\|^2
+\frac{2\Delta t}{3}\sum_{n=1}^{N-1}\Big(Re^{-1}\|\nabla ( \mathcal{F}[\bu_h^{n+1}])\|^2 +sRe_{m}^{-1}\|\nabla \mathcal{F}[\bB_h^{n+1}]\|^2\Big)}\nonumber\\
&&	+\dfrac{1}{3}\sum_{n=1}^{N-1}\Big(\norm{{\bu}_h^{n+1}-2{\bu}_h^{n}+{\bu}_h^{n-1}}_{F}^{2}
+s\norm{{\bB}_h^{n+1}-2{\bB}_h^{n}+{\bB}_h^{n-1}}_{F}^{2}\Big)\nonumber\nonumber\\[5pt]
&\leq&\Big(\frac{1}{3}\Big)^N(\|{\bu}_h^{0}\|^2+s\|{\bB}_h^{0}\|^2)+N\Big( 2(\|{\bu}_h^{1}\|^2+s\|{\bB}_h^{1}\|^2)+(\|{\bu}_h^{0}\|^2+s\|{\bB}_h^{0}\|^2) \Big) \nonumber\\
&&+\frac{2 N\Delta t}{3}\sum_{n=1}^{N-1}\Big(Re\|\bff(t^{n+1})\|_{-1}^2+s Re_m\|\bfg(t^{n+1})\|^2\Big).\label{sE8}
\end{eqnarray}
\end{lemma}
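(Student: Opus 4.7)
The plan is to derive the stability bound from the global energy equality (\ref{sEE5}) established in Section~3 via three moves: (i) absorb part of the forcing into the diffusion using Cauchy--Schwarz and Young's inequality; (ii) convert the indefinite $G$-norms at the endpoints to $L^2$ norms using Lemma~\ref{lem:gnorm}; and (iii) iterate the resulting scalar recursion to extract both the $(1/3)^N$ decay of the initial data and the linear-in-$N$ growth of the forcing contribution.

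For step~(i), I would estimate the forcing via $\Delta t(\bff(t^{n+1}),\mathcal{F}[\bu_h^{n+1}]) \le \Delta t\|\bff(t^{n+1})\|_{-1}\|\nabla\mathcal{F}[\bu_h^{n+1}]\|$, and apply Young's inequality with weights chosen so that the gradient term is exactly $\tfrac{\Delta t}{3Re}\|\nabla\mathcal{F}[\bu_h^{n+1}]\|^2$. Combined with the $\Delta t Re^{-1}\|\nabla\mathcal{F}[\bu_h^{n+1}]\|^2$ piece coming from (\ref{sEE5}), this leaves exactly $\tfrac{2\Delta t}{3}Re^{-1}\|\nabla\mathcal{F}[\bu_h^{n+1}]\|^2$ on the left, matching the target, while contributing $\tfrac{3Re\Delta t}{4}\|\bff(t^{n+1})\|_{-1}^2$ on the right. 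The magnetic body force $(\nabla\times\bfg(t^{n+1}),\mathcal{F}[\bB_h^{n+1}])$ is handled identically after an integration by parts, using the homogeneous boundary conditions. For step~(ii), the lower bound of Lemma~\ref{lem:gnorm} applied at time~$N$ provides $\tfrac34\|\bu_h^N\|^2 - \tfrac14\|\bu_h^{N-1}\|^2$ with its magnetic analog, while the upper bound applied at time~$1$ replaces the $G$-norm of the pair $(\bu_h^1,\bu_h^0)$ by $\tfrac32\|\bu_h^1\|^2 + \tfrac34\|\bu_h^0\|^2$, with a similar bound for $\bB_h$.

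Setting $a_n := \|\bu_h^n\|^2 + s\|\bB_h^n\|^2$, the estimate assembled after steps~(i)--(ii) has the structure $\tfrac34 a_N \le \tfrac14 a_{N-1} + \mathcal{R}$, equivalently $a_N \le \tfrac13 a_{N-1} + \tfrac43\mathcal{R}$, where $\mathcal{R}$ gathers the initial-data and cumulative forcing contributions and the retained dissipation and filter terms appear as nonnegative additions to the left. Iterating this scalar recursion yields $a_N \le (1/3)^N a_0 + \tfrac43\sum_{k=0}^{N-1}(1/3)^k \mathcal{R}$, and the loose bound $\sum_{k=0}^{N-1}(1/3)^k \le N$ produces the stated linear-in-$N$ prefactors on the $\bu_h^0,\bu_h^1,\bB_h^0,\bB_h^1$ and forcing terms. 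The main obstacle is the indefiniteness of the matrix $G$: the energy identity~(\ref{sEE5}) does not directly imply an $L^2$ bound on $\bu_h^N$, because the $G$-lower bound contributes the negative term $-\tfrac14\|\bu_h^{N-1}\|^2$. It is the iteration in step~(iii) that recovers this term, simultaneously producing the geometric $(1/3)^N$ decay of the initial data and the linear-in-$N$ inflation of the remaining contributions; the bookkeeping required to match the exact constants in the stated inequality is the main technical point.
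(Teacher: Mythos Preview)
Your plan is essentially the paper's proof: start from the energy equality (\ref{sEE5}), absorb the forcing via Young's inequality, pass from $G$-norms to $L^2$ norms via Lemma~\ref{lem:gnorm}, multiply by $4/3$, and iterate the resulting recursion $a_N\le \tfrac13 a_{N-1}+\tfrac43\mathcal{R}$ using the crude bound $\sum_{k=0}^{N-1}(1/3)^k\le N$. One bookkeeping slip: to land on the stated constants you should split the forcing with Young weight $\tfrac12$ rather than $\tfrac13$ (leaving $\tfrac{\Delta t}{2}Re^{-1}\|\nabla\mathcal{F}[\bu_h^{n+1}]\|^2$ on the left and $\tfrac{Re\Delta t}{2}\|\bff\|_{-1}^2$ on the right), since the subsequent $4/3$ rescaling is what turns these into the $\tfrac{2\Delta t}{3}$ and $\tfrac{2N\Delta t}{3}$ coefficients of (\ref{sE8}); your choice would give $\tfrac{8\Delta t}{9}$ on the diffusion and $N\Delta t$ on the forcing instead.
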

\begin{proof}
The proof starts with using the global energy conservation (\ref{sEE5}).

\begin{eqnarray}
\lefteqn{\norm{ \begin{bmatrix}
		{\bu}_h^{N}\\[3pt]
		{\bu}_h^{N-1}
		\end{bmatrix}} _{G}^{2}  +s\norm{ \begin{bmatrix}
		{\bB}_h^{N} \\[3pt]
		\quad{\bB}_h^{N-1}
		\end{bmatrix}} _{G}^{2} +\Delta t\sum_{n=1}^{N-1}\Big(Re^{-1}\|\nabla\mathcal{F}[\bu_h^{n+1}]\|^2+sRe_{m}^{-1}\|\nabla \mathcal{F}[\bB_h^{n+1}]\|^2\Big)}\nonumber\\	 &&+\dfrac{1}{4}\sum_{n=1}^{N-1}\Big(\norm{{\bu}_h^{n+1}-2{\bu}_h^{n}+{\bu}_h^{n-1}}_{F}^{2} 	+s\norm{{\bB}_h^{n+1}-2{\bB}_h^{n}+{\bB}_h^{n-1}}_{F}^{2}\Big)\nonumber\\[5pt]
&=&\norm { \begin{bmatrix}
	{\bu}_h^{1}\\[3pt]
	{\bu}_h^{0}
	\end{bmatrix}}_{G}^{2}+s\norm { \begin{bmatrix}
	{\bB}_h^{1} \\[3pt]
	{\bB}_h^{0}
	\end{bmatrix}}_{G}^{2}+\Delta t\sum_{n=1}^{N-1}(\bff(t^{n+1}),\mathcal{F}[\bu_h^{n+1}])\nonumber\\
&&+\Delta t\sum_{n=1}^{N-1}s(\nabla\times \bfg(t^{n+1}),\mathcal{F}[\bB_h^{n+1}]).\label{sEE6}
\end{eqnarray}
The forcing terms  can be bounded by using Cauchy-Schwarz and Young's inequalities as
\begin{eqnarray}
\Delta t(\bff(t^{n+1}),\mathcal{F}[\bu_h^{n+1}])&\leq&\frac{Re \Delta t}{2}\|\bff(t^{n+1})\|_{-1}^2+\frac{Re^{-1}\Delta t}{2}\|\nabla\mathcal{F}[\bu_h^{n+1}]\|^2,\label{su}\\[5pt]
s\Delta t(\nabla\times \bfg(t^{n+1}),\mathcal{F}[\bB_h^{n+1}])&\leq& \frac{s Re_m\Delta t}{2}\| \bfg(t^{n+1})\|^2\nonumber\\&&+\frac{s Re_m^{-1}\Delta t}{2}\|\nabla\mathcal{F}[\bB_h^{n+1}]\|^2.\label{sb}
\end{eqnarray}
Inserting (\ref{su}) and (\ref{sb}) in (\ref{sEE6}) gives
\begin{eqnarray}
\lefteqn{\norm{ \begin{bmatrix}
		{\bu}_h^{N}\\[3pt]
		{\bu}_h^{N-1}
		\end{bmatrix}} _{G}^{2} 	+s\norm{ \begin{bmatrix}
		{\bB}_h^{N} \\[3pt]
		{\bB}_h^{N-1}
		\end{bmatrix}} _{G}^{2}	+\frac{\Delta t}{2}\sum_{n=1}^{N-1}\Big(Re^{-1}\|\nabla\mathcal{F}[\bu_h^{n+1}]\|^2+{sRe_{m}^{-1}}\|\nabla \mathcal{F}[\bB_h^{n+1}]\|^2\Big)}\nonumber\\&&+\dfrac{1}{4}\sum_{n=1}^{N-1}\Big(\norm{{\bu}_h^{n+1}-2{\bu}_h^{n}+{\bu}_h^{n-1}}_{F}^{2}+s\norm{{\bB}_h^{n+1}-2{\bB}_h^{n}+{\bB}_h^{n-1}}_{F}^{2}\Big) \nonumber\\[5pt]
&\leq&\norm { \begin{bmatrix}
	{\bu}_h^{1}\\[3pt]
	{\bu}_h^{0}
	\end{bmatrix}}_{G}^{2}+s\norm { \begin{bmatrix}
	{\bB}_h^{1} \\[3pt]
	{\bB}_h^{0}
	\end{bmatrix}}_{G}^{2}  +\frac{Re \Delta t}{2}\sum_{n=1}^{N-1}\|\bff(t^{n+1})\|_{-1}^2\nonumber\\
&&+\frac{s Re_m\Delta t}{2}\sum_{n=1}^{N-1}\|\bfg(t^{n+1})\|^2.\label{sE6}
\end{eqnarray}
Using Lemma \ref{lem:gnorm}, we get
\begin{eqnarray}
\lefteqn{\frac{3}{4}(\|{\bu}_h^{N}\|^2+s\|{\bB}_h^{N}\|^2)
			+\frac{\Delta t}{2}\sum_{n=1}^{N-1}\Big(Re^{-1}\|\nabla\mathcal{F}[\bu_h^{n+1}]\|^2+{sRe_{m}^{-1}}\|\nabla \mathcal{F}[\bB_h^{n+1}]\|^2\Big)}\nonumber\\&&+\dfrac{1}{4}\sum_{n=1}^{N-1}\Big(\norm{{\bu}_h^{n+1}-2{\bu}_h^{n}+{\bu}_h^{n-1}}_{F}^{2}+s\norm{{\bB}_h^{n+1}-2{\bB}_h^{n}+{\bB}_h^{n-1}}_{F}^{2}\Big)\nonumber\\
&\leq&\frac{1}{4}(\|{\bu}_h^{N-1}\|^2+s\|{\bB}_h^{N-1}\|^2)+\frac{3}{2}(\|{\bu}_h^{1}\|^2+s\|{\bB}_h^{1}\|^2)+\frac{3}{4}(\|{\bu}_h^{0}\|^2+s\|{\bB}_h^{0}\|^2) \nonumber\\
&& +\frac{Re \Delta t}{2}\sum_{n=1}^{N-1}\|\bff(t^{n+1})\|_{-1}^2+\frac{s Re_m\Delta t}{2}\sum_{n=1}^{N-1}\|\bfg(t^{n+1})\|^2.\label{sE7}
\end{eqnarray}
Multiplying (\ref{sE7}) by $\frac{4}{3}$ and using induction finishes the proof.
\end{proof}
We also show that the scheme is unconditionally long
time stable.
\begin{lemma} 
	 Let  $\bff, \bfg\in L^{\infty}(\mathbb{R}_{+},\bfV_h^*)$, then
	the approximation \eqref{sBE1}-\eqref{sBE4} is long time stable in the following sense: for any $\Delta t > 0$
  \begin{eqnarray}
\lefteqn{\Big(\norm{ \begin{bmatrix}
		{\bu}_h^{N}\quad\\[3pt]
		{\bu}_h^{N-1}
		\end{bmatrix}} _{G}^{2}+\dfrac{Re^{-1}\Delta t}{8}\|\nabla\mathcal{F}[\bu_h^{N}]\|^2 \Big)	+\Big(s\norm{ \begin{bmatrix}
		{\bB}_h^{N}\quad \\[3pt]
		{\bB}_h^{N-1}
		\end{bmatrix}} _{G}^{2}+\dfrac{sRe_m^{-1}\Delta t}{8}\|\nabla\mathcal{F}[\bB_h^{N}]\|^2\Big)}	\nonumber\\
&\leq&\omega^{-({n+1})}\Big(\norm { \begin{bmatrix}
	{\bu}_h^{1}\\[3pt]
	{\bu}_h^{0}
	\end{bmatrix}}_{G}^{2}+\dfrac{Re^{-1}\Delta t}{8}\|\nabla\mathcal{F}[\bu_h^{1}]\|^2\Big)+\omega^{-({n+1})}\Big(s\norm { \begin{bmatrix}
	{\bB}_h^{1} \\[3pt]
	{\bB}_h^{0}
	\end{bmatrix}}_{G}^{2} +\dfrac{sRe_m^{-1}\Delta t}{8}\|\nabla\mathcal{F}[\bB_h^{1}]\|^2\Big)\nonumber\\&& +\frac{Re \Delta t}{2\omega}\|\bff(t^{n+1})\|_{L^{\infty}(\mathbb{R}_{+},{\bfV_h^*})}^2+\frac{s Re_m\Delta t}{2\omega}\|\bfg(t^{n+1})\|_{L^{\infty}(\mathbb{R}_{+},{\bfV_h^*})}^2 ,\label{longstabl}
\end{eqnarray}
where $\omega=(1+\alpha)(1+\beta)$, $\alpha=\min\{\dfrac{C_1^2Re^{-1}\Delta t}{8C_p^2},2\}$,$\beta=\min\{\dfrac{C_1^2Re_m^{-1}\Delta t}{8C_p^2},2\}$, $C_1$ is given by Lemma \ref{lemm:eqv} and $C_p$ is given by \eqref{pof}.
\end{lemma}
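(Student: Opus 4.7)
My plan is to extract a per-step geometric contraction of the form $\omega\,A_{n+1}\le A_n + R_{n+1}$, where $A_n$ denotes the quantity on the left-hand side of the stated inequality at time $n$ and $R_{n+1}$ collects the forcing, and then to iterate this recursion backwards from $N$ down to $1$.

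First, I would start from the one-step energy identity \eqref{sEE4} used in the proof of the preceding Global Energy Conservation lemma, before the summation in $n$ is carried out. Bounding the forcing terms by Cauchy--Schwarz and Young's inequality in $\bfV_h^*$-duality absorbs half of each viscous or resistive dissipation back into the left-hand side (exactly as was done in the proof of Lemma \ref{l:stab}). After dropping the non-negative jump contributions $\|u_h^{n+1}-2u_h^{n}+u_h^{n-1}\|_F^2$ and $s\|B_h^{n+1}-2B_h^{n}+B_h^{n-1}\|_F^2$, the surviving per-step inequality reads
\begin{align*}
&\|[u_h^{n+1},u_h^n]\|_G^2 + s\|[B_h^{n+1},B_h^n]\|_G^2 + \tfrac{Re^{-1}\Delta t}{2}\|\nabla\mathcal{F}[u_h^{n+1}]\|^2 + \tfrac{sRe_m^{-1}\Delta t}{2}\|\nabla\mathcal{F}[B_h^{n+1}]\|^2\\
&\qquad \le \|[u_h^n,u_h^{n-1}]\|_G^2 + s\|[B_h^n,B_h^{n-1}]\|_G^2 + \tfrac{Re\Delta t}{2}\|\bff(t^{n+1})\|_{\bfV_h^*}^2 + \tfrac{sRe_m\Delta t}{2}\|\bfg(t^{n+1})\|_{\bfV_h^*}^2.
\end{align*}
I would then split each dissipative coefficient as $\tfrac12=\tfrac18+\tfrac38$. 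The $\tfrac18$-parts are retained on the left, precisely reproducing the extra $\|\nabla\mathcal{F}[\cdot]\|^2$ terms that appear inside $A_{n+1}$; the $\tfrac38$-parts remain as a reserve for producing contraction.

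The decisive step, and what I expect to be the main obstacle, is the absorption estimate
$$\alpha\,\Bigl(\|[u_h^{n+1},u_h^n]\|_G^2 + \tfrac{Re^{-1}\Delta t}{8}\|\nabla\mathcal{F}[u_h^{n+1}]\|^2\Bigr) \le \tfrac{3Re^{-1}\Delta t}{8}\|\nabla\mathcal{F}[u_h^{n+1}]\|^2,$$
together with its magnetic analogue involving $\beta$. I would prove this by combining the $L^2$--$G$-norm equivalence of Lemma \ref{lemm:eqv} (which bounds $\|\cdot\|_G^2$ by $\|\cdot\|^2/C_1$) with the Poincar\'e--Friedrichs inequality \eqref{pof} applied to $\mathcal{F}[u_h^{n+1}]$. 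The specific form $\alpha=\min\{C_1^2 Re^{-1}\Delta t/(8C_p^2),2\}$ arises from patching two regimes: the linear-in-$\Delta t$ estimate is tight when $\Delta t$ is small, while the cap at $2$ takes over once the dissipation alone is strong enough to dominate the $G$-norm term.

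Substituting the two absorption inequalities into the split one-step inequality yields the contraction $\omega\,A_{n+1}\le A_n + R_{n+1}$ with $\omega=(1+\alpha)(1+\beta)$. Iterating this recursion from $n=1$ up to $N-1$ and summing the resulting geometric series (of common ratio $\omega^{-1}<1$) against the $L^{\infty}(\mathbb{R}_{+};\bfV_h^*)$ bounds on $\bff$ and $\bfg$ gives the advertised decay. The sharpest technical point is the absorption inequality itself: since $\mathcal{F}[u_h^{n+1}]=\tfrac{3}{2}u_h^{n+1}-u_h^n+\tfrac{1}{2}u_h^{n-1}$ mixes three successive time levels, relating $\|[u_h^{n+1},u_h^n]\|_G^2$ to $\|\nabla\mathcal{F}[u_h^{n+1}]\|^2$ requires careful bookkeeping with the constants from Lemmas \ref{lemm:eqv} and \ref{lem:gnorm}, and the truncation at $2$ in both $\alpha$ and $\beta$ is precisely what makes the contraction uniform in $\Delta t>0$.
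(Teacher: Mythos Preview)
Your plan coincides with the paper's proof: start from the one-step identity \eqref{sEE4}, bound the forcing by Young's inequality in the $\bfV_h^*$-norm, drop the nonnegative $\|\cdot\|_F^2$ jump terms, establish a per-step contraction $(1+\alpha)(1+\beta)A_{n+1}\le A_n+R_{n+1}$ via Poincar\'e--Friedrichs and Lemma~\ref{lemm:eqv}, and iterate. The one technical point where the paper differs---and which resolves exactly the difficulty you flag---is that instead of merely splitting $\tfrac12=\tfrac18+\tfrac38$, the paper \emph{adds} $\tfrac{Re^{-1}\Delta t}{8}\|\nabla\mathcal F[u_h^{n}]\|^2$ and $\tfrac{sRe_m^{-1}\Delta t}{8}\|\nabla\mathcal F[B_h^{n}]\|^2$ to both sides of the one-step inequality: on the right this turns the $G$-norm terms into $A_n$ exactly, and on the left it places $\|\nabla\mathcal F[u_h^{n}]\|^2$ alongside $\|\nabla\mathcal F[u_h^{n+1}]\|^2$, so that the absorption estimate uses the pair $\|\nabla\mathcal F[u_h^{n+1}]\|^2+\|\nabla\mathcal F[u_h^{n}]\|^2$ rather than $\|\nabla\mathcal F[u_h^{n+1}]\|^2$ alone to dominate $\|[u_h^{n+1},u_h^{n}]\|_G^2$.
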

\begin{proof}
	Applying Cauchy-Schwarz and Young's inequalities for the global energy conservation equation (\ref{sEE5}) yields
	\begin{eqnarray}
\lefteqn{\norm{ \begin{bmatrix}
		{\bu}_h^{n+1}\\[3pt]
		{\bu}_h^{n}\quad
		\end{bmatrix}} _{G}^{2} 	+s\norm{ \begin{bmatrix}
		{\bB}_h^{n+1} \\[3pt]
		{\bB}_h^{n}\quad
		\end{bmatrix}} _{G}^{2}	+\frac{\Delta t}{2}\Big(Re^{-1}\|\nabla\mathcal{F}[ \bu_h^{n+1}]\|^2+{sRe_{m}^{-1}}\|\nabla \mathcal{F}[\bB_h^{n+1}]\|^2\Big)}\nonumber\\&&+\dfrac{1}{4}\Big(\norm{{\bu}_h^{n+1}-2{\bu}_h^{n}+{\bu}_h^{n-1}}_{F}^{2}+s\norm{{\bB}_h^{n+1}-2{\bB}_h^{n}+{\bB}_h^{n-1}}_{F}^{2}\Big) \nonumber\\[5pt]
&\leq&\norm { \begin{bmatrix}
	{\bu}_h^{n}\quad\\[3pt]
	{\bu}_h^{n-1}
	\end{bmatrix}}_{G}^{2}+s\norm { \begin{bmatrix}
	{\bB}_h^{n}\quad \\[3pt]
	{\bB}_h^{n-1}
	\end{bmatrix}}_{G}^{2}  +\frac{Re \Delta t}{2}\|\bff(t^{n+1})\|_{\bfV_h^*}^2\nonumber\\
&&+\frac{s Re_m\Delta t}{2}\|\bfg(t^{n+1})\|_{\bfV_h^*}^2.\label{sE9}
\end{eqnarray}

 Dropping the fourth term in the left hand side of \eqref{sE9}, and adding both sides $\dfrac{Re^{-1}\Delta t}{8}\|\nabla\mathcal{F}[\bu_h^{n}]\|^2$ and $\dfrac{sRe_m^{-1}\Delta t}{8}\|\nabla\mathcal{F}[\bB_h^{n}]\|^2$ results in
 \begin{eqnarray}
 \lefteqn{\Big(\norm{ \begin{bmatrix}
 		{\bu}_h^{n+1}\\[3pt]
 		{\bu}_h^{n}\quad
 		\end{bmatrix}} _{G}^{2}+\dfrac{Re^{-1}\Delta t}{8}\|\nabla\mathcal{F}[\bu_h^{n+1}]\|^2 \Big)	+\Big(s\norm{ \begin{bmatrix}
 		{\bB}_h^{n+1} \\[3pt]
 		{\bB}_h^{n}\quad
 		\end{bmatrix}} _{G}^{2}+\dfrac{sRe_m^{-1}\Delta t}{8}\|\nabla\mathcal{F}[\bB_h^{n+1}]\|^2\Big)}	\nonumber\\[3pt]&&+\dfrac{Re^{-1}\Delta t}{8}(\|\nabla\mathcal{F}[\bu_h^{n+1}]\|^2+\|\nabla\mathcal{F}[\bu_h^{n}]\|^2)+\frac{Re^{-1}\Delta t}{4}\|\nabla\mathcal{F}[ \bu_h^{n+1}]\|^2\nonumber\\[3pt]&&+\dfrac{sRe_m^{-1}\Delta t}{8}(\|\nabla\mathcal{F}[\bB_h^{n+1}]\|^2+\|\nabla\mathcal{F}[\bB_h^{n}]\|^2)+\frac{sRe_{m}^{-1}\Delta t}{4}\|\nabla \mathcal{F}[\bB_h^{n+1}]\|^2\nonumber\\[5pt]
 &\leq&\Big(\norm { \begin{bmatrix}
 	{\bu}_h^{n}\quad\\[3pt]
 	{\bu}_h^{n-1}
 	\end{bmatrix}}_{G}^{2}+\dfrac{Re^{-1}\Delta t}{8}\|\nabla\mathcal{F}[\bu_h^{n}]\|^2\Big)+\Big(s\norm { \begin{bmatrix}
 	{\bB}_h^{n} \quad\\[3pt]
 	{\bB}_h^{n-1}
 	\end{bmatrix}}_{G}^{2} +\dfrac{sRe_m^{-1}\Delta t}{8}\|\nabla\mathcal{F}[\bB_h^{n}]\|^2\Big)\nonumber\\[3pt]&& +\frac{Re \Delta t}{2}\|\bff(t^{n+1})\|_{\bfV_h^*}^2+\frac{s Re_m\Delta t}{2}\|\bfg(t^{n+1})\|_{\bfV_h^*}^2.\label{sE10}
 \end{eqnarray}
 The third and fourth terms can be bounded by using Poincar\'e's-Friedrichs' inequality and Lemma \ref{lemm:eqv} as
 \begin{eqnarray}
 \lefteqn{\dfrac{Re^{-1}\Delta t}{8}(\|\nabla\mathcal{F}[\bu_h^{n+1}]\|^2+\|\nabla\mathcal{F}[\bu_h^{n}]\|^2)+\frac{Re^{-1}\Delta t}{4}\|\nabla\mathcal{F}[\bu_h^{n+1}]\|^2}\nonumber\\
 &\geq&\dfrac{C_1^2Re^{-1}\Delta t}{8C_p^2}\norm{ \begin{bmatrix}
 	{\bu}_h^{n+1}\\[3pt]
 	{\bu}_h^{n}\quad
 	\end{bmatrix}} _{G}^{2}+\frac{Re^{-1}\Delta t}{4}\|\nabla\mathcal{F}[ \bu_h^{n+1}]\|^2\nonumber\\
 &\geq&\alpha\Big(\norm{ \begin{bmatrix}
 	{\bu}_h^{n+1}\\[3pt]
 	{\bu}_h^{n}\quad
 	\end{bmatrix}} _{G}^{2}+\dfrac{Re^{-1}\Delta t}{8}\|\nabla\mathcal{F}[\bu_h^{n+1}]\|^2 \Big),\label{alp}
 \end{eqnarray}
 where $\alpha=\min\{\dfrac{C_1^2Re^{-1}\Delta t}{8C_p^2},2\}$. Using similar techniques for the fifth and the sixth terms,  we get
 \begin{eqnarray}
 \lefteqn{\dfrac{sRe_m^{-1}\Delta t}{8}(\|\nabla\mathcal{F}[\bB_h^{n+1}]\|^2+\|\nabla\mathcal{F}[\bB_h^{n}]\|^2)+\frac{sRe_{m}^{-1}\Delta t}{4}\|\nabla \mathcal{F}[\bB_h^{n+1}]\|^2}\nonumber\\&\geq&\beta \Big(s\norm{ \begin{bmatrix}
 	{\bB}_h^{n+1} \\[3pt]
 	{\bB}_h^{n}\quad
 	\end{bmatrix}} _{G}^{2}+\dfrac{sRe_m^{-1}\Delta t}{8}\|\nabla\mathcal{F}[\bB_h^{n+1}]\|^2\Big),\label{bet}
 \end{eqnarray}
where $\beta=\min\{\dfrac{C_1^2Re_m^{-1}\Delta t}{8C_p^2},2\}$. Inserting \eqref{alp}-\eqref{bet} in \eqref{sE10} and using induction yields the stated result.

\end{proof}

\subsection{A-{\it priori} Error Estimate}

In this section, we present  a detailed convergence analysis of the proposed time filtered method for MHD equations.
We define the discrete norms as
\begin{eqnarray}
|||w|||_{\infty,m}=\max\limits_{0\leq n \leq N}||w^n||_m,\quad  |||w|||_{k,m}=\Big(\Delta t \sum\limits_{n=1}^{N-1}||w^n||_m^k \Big)^{\frac{1}{k}}.
\end{eqnarray}
For the optimal asymptotic error estimation, we assume the following regularity assumptions for the exact solution of \eqref{mhd5}-\eqref{mhd8}:
\begin{eqnarray}
	&& u,\,B \in{L^\infty(0,T;H^{s+1}(\Omega) \cap H^3(\Omega) )},\nonumber\\
	&&u_t,\,B_t \in{L^2(0,T;H^{s+1}(\Omega)^d)},\label{reg}\\
	&&u_{tt},\,B_{tt}\in{L^2(0,T;H^{1}(\Omega)^d)},\nonumber\\
		&&u_{ttt},\,B_{ttt}\in{L^2(0,T;L^{2}(\Omega)^d)}.\nonumber
\end{eqnarray}
The mesh and velocity approximating polynomial degree $k$ is chosen so that the Scott-Vogelius pair is inf-sup stable and the properties \eqref{app1}-\eqref{app2} hold.
\begin{theorem}\label{The:conv} Suppose regularity assumptions \eqref{reg} hold. Under the following time step condition
\begin{eqnarray}
\Delta t &\leq&C(s)\Big( |||\nabla\bu|||_{\infty,0}^4+ ||| \nabla  B|||_{\infty,0}^4\Big)^{-1},
\end{eqnarray}
there exists a positive constant $C^*$ independent of $h$ and $\Delta t$ such that	
\begin{eqnarray}
\lefteqn{\norm{\bu^N-{\bu}_h^{N}}^{2}+s\norm{\bB^N-{\bB}_h^{N}}^{2}
	+\frac{\Delta t}{3}\sum_{n=0}^{N-1}\Big(Re^{-1}\|\nabla (\bu^{n+1}-{\bu}_h^{n+1})\|^{2}}\nonumber\\
	&&+sRe_m^{-1}\norm{\nabla (\bB^{n+1}-{\bB}^{n+1}_h)}^{2}\Big)\nonumber
\\
&\leq& \Big(\frac{1}{3}\Big)^N(\norm{{\bu}_h^0-I_{\bu}^0}^{2}+ s\norm{{\bB}_h^0-I_{\bB}^0}^{2})+2N(\norm{{\bu}_h^1-I_{\bB}^1}^{2}+ s\norm{{\bB}_h^1-I_{\bB}^1}^{2}\nonumber\\&&+\norm{{\bu}_h^0-I_{\bu}^0}+ s\norm{{\bB}_h^0-I_{\bB}^0}^{2})+C^*(
{h^{2s}}
+{\Delta t}^4)
. \label{thm}
\end{eqnarray}

\end{theorem}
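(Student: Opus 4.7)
I would mirror the energy stability proof of Lemma~\ref{l:stab}, now applied to the error equations in the equivalent BDF2 form (\ref{sBE1})--(\ref{sBE4}). The plan is: (a) identify the consistency residuals between the true solution and the filtered scheme; (b) decompose the errors into interpolation and finite-element parts; (c) test with filtered, discretely divergence-free quantities; (d) apply the $G$-norm telescoping identity of Lemma~\ref{lem:inn}; (e) close with the discrete Gronwall Lemma~\ref{gron}.

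\textbf{Consistency and decomposition.} First, I would evaluate the continuous MHD system at $t=t^{n+1}$ and recast it in the form of the scheme by writing $\bu_t(t^{n+1}) = (\tfrac{3}{2}\bu^{n+1}-2\bu^n+\tfrac{1}{2}\bu^{n-1})/\Delta t + \sigma_u^{n+1}$, and replacing each occurrence of $\bu^{n+1},\bB^{n+1},P^{n+1},\lambda^{n+1}$ in the diffusive, nonlinear, pressure and multiplier terms by $\mathcal{F}[\bu^{n+1}],\mathcal{F}[\bB^{n+1}],\mathcal{F}[P^{n+1}],\mathcal{F}[\lambda^{n+1}]$ plus filter residuals. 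Lemma~\ref{lem:fw} controls both $\Delta t \sum_n \|\sigma^{n+1}\|^2$ and $\Delta t \sum_n \|\mathcal{F}[w^{n+1}]-w^{n+1}\|^2$ at order $O(\Delta t^4)$. I would then decompose $\bu^n-\bu_h^n=\eta_u^n-\phi_{u,h}^n$, where $I_u^n\in V_h$ is an approximation satisfying (\ref{app1}) and $\phi_{u,h}^n=\bu_h^n-I_u^n\in V_h$, and split the magnetic error analogously. Subtracting the scheme from the modified true-solution equation and testing with $\bv_h=\mathcal{F}[\phi_{u,h}^{n+1}]$ and $\chi_h=s\,\mathcal{F}[\phi_{B,h}^{n+1}]$ and adding, the pressure and Lagrange-multiplier error contributions vanish by (\ref{divuB}), while Lemma~\ref{lem:inn} turns the discrete time derivatives into the telescoping $G$-norm difference plus a nonnegative $F$-norm jump, exactly as in the energy-conservation proof.

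\textbf{Main obstacle: the nonlinear terms.} On subtraction, each nonlinearity splits into three parts: (i) interpolation-driven pieces involving $\mathcal{F}[\eta_u^{n+1}]$ or $\mathcal{F}[\eta_B^{n+1}]$, bounded via Lemma~\ref{tribound} and the approximation properties (\ref{app1})--(\ref{app2}); (ii) the Lorentz and induction cross-terms, which, when the magnetic equation is weighted by $s$, cancel exactly by skew-symmetry and the pointwise divergence-free property, mirroring the cancellation used in the energy lemma; (iii) the dangerous terms of the form $(\mathcal{F}[\phi_{u,h}^{n+1}]\cdot\nabla\mathcal{F}[\bu^{n+1}],\mathcal{F}[\phi_{u,h}^{n+1}])$ and magnetic analogues. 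For (iii) I would apply the second trilinear bound in Lemma~\ref{tribound} together with Young's inequality so that $\|\nabla\mathcal{F}[\phi_{u,h}^{n+1}]\|$ and $\|\nabla\mathcal{F}[\phi_{B,h}^{n+1}]\|$ are absorbed into the $Re^{-1},Re_m^{-1}$ diffusion terms on the left, leaving a residue of the form $C\big(\|\nabla\bu^{n+1}\|^4+\|\nabla\bB^{n+1}\|^4\big)\big(\|\mathcal{F}[\phi_{u,h}^{n+1}]\|^2+s\|\mathcal{F}[\phi_{B,h}^{n+1}]\|^2\big)$, which by (\ref{reg}) and Lemma~\ref{lemm:eqv} can be recast in a Gronwall-friendly $G$-norm form. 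This bookkeeping --- preserving the magnetic coupling in the symmetric form while reshaping the time-lagged norms produced by $\mathcal{F}$ into quantities already sitting on the telescoped left-hand side --- is the most delicate step; a nonsymmetric bound would force a much stronger step-size condition than the one stated.

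\textbf{Closing the estimate.} Summing from $n=1$ to $N-1$ and invoking Lemma~\ref{gron}, the stated time-step restriction $\Delta t\le C(s)(|||\nabla\bu|||_{\infty,0}^4+|||\nabla\bB|||_{\infty,0}^4)^{-1}$ is precisely what guarantees the hypothesis $\Delta t\,\delta_n<1$. Lemma~\ref{lem:gnorm} then converts the terminal $G$-norm into $L^2$-norms of $\phi_{u,h}^N,\phi_{B,h}^N$, the consistency residuals contribute the $O(\Delta t^4)$ term via Lemma~\ref{lem:fw}, and the triangle inequality combined with (\ref{app1})--(\ref{app2}) yields the $O(h^{2s})$ interpolation contribution. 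Finally, the geometric prefactor $(1/3)^N$ multiplying $(\|\bu_h^0-I_u^0\|^2+s\|\bB_h^0-I_B^0\|^2)$ and the linear-in-$N$ initial-data factor emerge from the same multiplication-by-$4/3$ and induction performed at the end of Lemma~\ref{l:stab}, now applied to the $G$-norm of the initial $\phi_{u,h},\phi_{B,h}$.
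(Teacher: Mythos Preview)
Your proposal is correct and follows essentially the same route as the paper: derive the error equations with consistency residuals $E_1,E_2$, split $e_u^n=\eta_u^n-\phi_{u,h}^n$ (and similarly for $B$), test with $\mathcal{F}[\phi_{u,h}^{n+1}]$ and $s\,\mathcal{F}[\phi_{B,h}^{n+1}]$, use Lemma~\ref{lem:inn} for telescoping, exploit the skew-symmetric cancellation of the Lorentz/induction cross-terms, bound the dangerous trilinear terms with Lemma~\ref{tribound} to leave $\|\mathcal{F}[\phi_{u,h}^{n+1}]\|^2\|\nabla\mathcal{F}[u^{n+1}]\|^4$-type residues, apply discrete Gronwall under the stated time-step restriction, and finish with Lemma~\ref{lem:gnorm}, the $4/3$-induction, and the triangle inequality. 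Your flagging of the ``time-lagged norms produced by $\mathcal{F}$'' as the delicate bookkeeping step is apt; the paper handles it tacitly when invoking Gronwall on the $\|\mathcal{F}[\phi_{\cdot,h}^{n+1}]\|^2$ terms.
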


\begin{proof} The proof starts by deriving the error equations. We
consider continuous variational formulations of
\eqref{mhd5}-\eqref{mhd8} at the time level $t=t^{n+1}$. Adding
and subtracting terms yields the following variational
formulations for the velocity,
\begin{eqnarray}
\lefteqn{\left(\frac{\frac{3}{2} {\bu}^{n+1}-2\bu^n+\frac{1}{2}\bu^{n-1}}{\Delta t},\bv_h\right)+Re^{-1}(\nabla \mathcal{F}[\bu^{n+1}],\nabla \bv_h)}\nonumber\\&&\nonumber
+(\mathcal{F}[\bu^{n+1}]\cdot\nabla \mathcal{F}[\bu^{n+1}],v_h) -s(\mathcal{F}[\bB^{n+1}]\cdot\nabla(\mathcal{F}[\bB^{n+1}]),\bv_h)\nonumber
\\
& =& (\bff(t^{n+1}),\bv_h)+E_1(\bu,\bB,\bv_h), \label{contu}
\end{eqnarray}
for all $v_h\in V_h$ and for the magnetic field
\begin{eqnarray}
\lefteqn{\left(\frac{\frac{3}{2} {\bB}^{n+1}-2\bB^n+\frac{1}{2}\bB^{n-1}}{\Delta t},\chi_h\right)+Re_m^{-1}(\nabla \mathcal{F}[\bB^{n+1}],\nabla \chi_h)}\nonumber
\\&&-(\mathcal{F}[\bB^{n+1}]\cdot\nabla \mathcal{F}[\bu^{n+1}],\chi_h)+(\mathcal{F}[\bu^{n+1}]\cdot\nabla \mathcal{F}[\bB^{n+1}],\chi_h) \nonumber
\\
&=&(\nabla \times
g(t^{n+1}),\chi_h)+E_2(\bu,\bB,\chi_{h}),  \label{contB}
\end{eqnarray}
for all $\chi_h\in V_h$ where \begin{eqnarray}
E_1(\bu,\bB,\bv_h)&=&\left(\frac{\frac{3}{2} {\bu}^{n+1}-2\bu^n+\frac{1}{2}\bu^{n-1}}{\Delta t}-u_t^{n+1},v_h\right)\nonumber\\
&&+Re^{-1}(\nabla \mathcal{F}[\bu^{n+1}],\nabla v_h)-Re^{-1}(\nabla u^{n+1},\nabla v_h)\nonumber\\
&&+(\mathcal{F}[\bu^{n+1}]\cdot\nabla \mathcal{F}[\bu^{n+1}],v_h)-(u^{n+1}\cdot\nabla u^{n+1},v_h)\nonumber\\
&&+s(B^{n+1}\cdot\nabla B^{n+1},v_h)-s(\mathcal{F}[\bB^{n+1}]\cdot\nabla \mathcal{F}[\bB^{n+1}],v_h),\\[0.3cm]
E_2(\bu,\bB,\chi_{h})&=&\left(\frac{\frac{3}{2} {\bB}^{n+1}-2\bB^n+\frac{1}{2}\bB^{n-1}}{\Delta t}-B_t^{n+1},\chi_h\right)\nonumber\\
&&+Re_m^{-1}(\nabla \mathcal{F}[\bB^{n+1}],\nabla \chi_h)-Re_m^{-1}(\nabla \bB^{n+1},\nabla \chi_h)\nonumber\\&&+(\bB^{n+1}\cdot\nabla \bu^{n+1},\chi_h)-(\mathcal{F}[\bB^{n+1}]\cdot\nabla \mathcal{F}[\bu^{n+1}],\chi_h)\nonumber\\&&+(\mathcal{F}[\bu^{n+1}]\cdot\nabla( \mathcal{F}[\bB^{n+1}]),\chi_h)-(u^{n+1}\cdot\nabla B^{n+1},\chi_h).
\end{eqnarray}
 Denote the error between finite
element solution and continuous solution by
$e_u^n:={u}^n-u_h^n$ and $e^n_B:={B}^n-B_h^n$. The error
equations are obtained by subtracting \eqref{sBE1}-\eqref{sBE3} from \eqref{contu}-\eqref{contB}, respectively:
\begin{eqnarray}
\lefteqn{\frac{1}{2\Delta t}(3e_{\bu}^{n+1}-4e_{\bu}^{n}+e_{\bu}^{n-1},v_{h})
+Re^{-1}(\nabla \mathcal{F}[e_{\bu}^{n+1}],\nabla v_{h}) + (\mathcal{F}[u_{h}^{n+1}]
\cdot \nabla \mathcal{F}[e_{\bu}^{n+1}],v_{h})} \nonumber
\\
&&+ (\mathcal{F}[e_{\bu}^{n+1}]\cdot \nabla \mathcal{F}[ u^{n+1}],v_{h})-s(\mathcal{F}[B_h^{n+1}] \cdot \nabla \mathcal{F}[e_{\bB}^{n+1}],v_{h}) -s(\mathcal{F}[e_{\bB}^{n+1}] \cdot
\nabla \mathcal{F}[B^{n+1}],v_{h})\nonumber\\ &=& E_1(\bu,\bB,\bv_h), \label{erru}
\end{eqnarray}
and
\begin{eqnarray}
\lefteqn{\frac{1}{2\Delta
t}(3e_{B}^{n+1}-4e_{B}^{n}+e_{B}^{n-1},\chi_{h}) +Re_{m}^{-1}(\nabla
\mathcal{F}[e_{\bB}^{n+1}],\nabla\chi_{h})
-(\mathcal{F}[B_h^{n+1}] \cdot \nabla \mathcal{F}[e_{u}^{n+1}],\chi_{h})}
 \nonumber
\\
&&-(\mathcal{F}[e_{\bB}^{n+1}]\cdot \nabla \mathcal{F}[u^{n+1}],\chi_{h})+(\mathcal{F}[u_{h}^{n+1}]\cdot \nabla \mathcal{F}[e_{\bB}^{n+1}],\chi_h)
+(\mathcal{F}[e_{\bu}^{n+1}]\cdot \nabla \mathcal{F}[B^{n+1}],\chi_{h}) \nonumber\\& =& E_2(\bu,\bB,\chi_h).\label{errB}
\end{eqnarray}
We split the errors as follows
\begin{eqnarray}
e_{\bu}^{n}&=& \bu^n-{\bu_h}^n=(\bu^n-I_{\bu}^n)-({\bu_h}^n-I_{\bu}^n)=\eta_{\bu}^n-\phi_{\bu,h}^n, \label{dec1}
\\
e_{\bB}^{n}&=& \bB^n-{\bB_h}^n=(\bB^n-I_{\bB}^n)-({\bB_h}^n-I_{\bB}^n)=\eta_{\bB}^n-\phi_{\bB,h}^n, \label{dec2}
\end{eqnarray}
where $I_{\bu}^n$ and $I_{\bB}^n$ are the interpolations of
$u^n$ and $B^n$ in $V_h$, respectively. Substituting \eqref{dec1} into \eqref{erru} and \eqref{dec2} into \eqref{errB}, choosing $v_h=\mathcal{F}[\phi_{\bu,h}^{n+1}]$ and using Lemma \ref{lem:inn} and \eqref{divuB}, 
leads to
\begin{eqnarray}
\lefteqn{\frac{1}{\Delta t}\norm{ \begin{bmatrix}
		\phi_{\bu,h}^{n+1} \\
	\phi_{\bu,h}^{n}
		\end{bmatrix}} _{G}^{2} -\frac{1}{\Delta t}\norm { \begin{bmatrix}
		\phi_{\bu,h}^{n} \\
		\phi_{\bu,h}^{n-1}
		\end{bmatrix}}_{G}^{2} 	+\dfrac{1}{4\Delta t}\norm{\phi_{\bu,h}^{n+1}-2\phi_{\bu,h}^{n}+\phi_{\bu,h}^{n-1}}_{F}^{2}}\nonumber\\&&
+Re^{-1}\|{\nabla \mathcal{F}[\phi_{\bu,h}^{n+1}]}\|^{2} \nonumber
\\
&\leq&|(\frac{3\eta_u^{n+1}-4\eta_u^n+\eta_u^{n-1}}{2\Delta t},\mathcal{F}[\phi_{\bu,h}^{n+1}])|+Re^{-1}| (\nabla
\mathcal{F}[\eta_u^{n+1}],\nabla  \mathcal{F}[\phi_{\bu,h}^{n+1}])|\nonumber\\&&+s(\mathcal{F}[\bB_h^{n+1}]\cdot \nabla
\mathcal{F}[\phi_{\bB,h}^{n+1}],\mathcal{F}[\phi_{\bu,h}^{n+1}])+|(\mathcal{F}[\bu_h^{n+1}]\cdot \nabla \mathcal{F}[\eta_u^{n+1}],\mathcal{F}[\phi_{\bu,h}^{n+1}])| \nonumber
\\[0.1cm]
&& + |(\mathcal{F}[\phi_{\bu,h}^{n+1}]\cdot \nabla \mathcal{F}[ u^{n+1}],\mathcal{F}[\phi_{\bu,h}^{n+1}])|+|(\mathcal{F}[\eta_u^{n+1}]\cdot \nabla \mathcal{F}[u^{n+1}],\mathcal{F}[\phi_{\bu,h}^{n+1}])|
  \nonumber
\\[0.1cm]
&&+s|(\mathcal{F}[\bB_h^{n+1}]\cdot \nabla
\mathcal{F}[\eta_B^{n+1}],\mathcal{F}[\phi_{\bu,h}^{n+1}])+s|(\mathcal{F}[\phi_{\bB,h}^{n+1}]\cdot \nabla \mathcal{F}[B^{n+1}],\mathcal{F}[\phi_{\bu,h}^{n+1}])| \nonumber
\\[0.1cm]
&&+s|(\mathcal{F}[\eta_B^{n+1}]\cdot \nabla \mathcal{F}[B^{n+1}],\mathcal{F}[\phi_{\bu,h}^{n+1}])| +|E_1(\bu,\bB,\mathcal{F}[\phi_{\bu,h}^{n+1}])|. \label{velerror1}
\end{eqnarray}

Then, we now bound the terms in the right hand side of (\ref{velerror1}) and obtain
\begin{align}
|(\frac{3\eta_u^{n+1}-4\eta_u^n+\eta_u^{n-1}}{2\Delta t},\mathcal{F}[\phi_{\bu,h}^{n+1}])|
	&\leq
	CRe\|\frac{3\eta_u^{n+1}-4\eta_u^n+\eta_u^{n-1}}{2\Delta t}\|^2
	+\frac{Re^{-1}}{24}\| \nabla \mathcal{F}[\phi_{\bu,h}^{n+1}]\|^{2},\label{boundvel1}\\
Re^{-1}| (\nabla
\mathcal{F}[\eta_u^{n+1}],\nabla  \mathcal{F}[\phi_{\bu,h}^{n+1}])|&\leq CRe^{-1}\|\nabla
\mathcal{F}[\eta_u^{n+1}]\|^2+\frac{Re^{-1}}{24}\|\nabla \mathcal{F}[\phi_{\bu,h}^{n+1}]\|^2,
\end{align}
for the first two terms along with the Cauchy-Schwarz and Young's inequalities. Also, with Lemma {\ref{tribound}}, Cauchy-Schwarz and Young's inequalities, we get estimations for the nonlinear terms:
\begin{align}
|(\mathcal{F}[\bu_h^{n+1}]\cdot \nabla \mathcal{F}[\eta_u^{n+1}],\mathcal{F}[\phi_{\bu,h}^{n+1}])| \leq& CRe\|\nabla\mathcal{F}[ \bu_h^{n+1}]\|^2\|\nabla \mathcal{F}[\eta_u^{n+1}]\|^2+\frac{Re^{-1}}{24}\|\nabla \mathcal{F}[\phi_{\bu,h}^{n+1}]\|^2,\allowdisplaybreaks\\\allowdisplaybreaks
|(\mathcal{F}[\phi_{\bu,h}^{n+1}]\cdot \nabla \mathcal{F}[u^{n+1}],\mathcal{F}[\phi_{\bu,h}^{n+1}])|\leq& C{Re}^3\|\mathcal{F}[\phi_{\bu,h}^{n+1}]\|^2\| \nabla \mathcal{F}[u^{n+1}]\|^4+\frac{{Re}^{-1}}{24}\|\nabla \mathcal{F}[\phi_{\bu,h}^{n+1}]\|^2,
\\|(\mathcal{F}[\eta_u^{n+1}]\cdot \nabla \mathcal{F}[u^{n+1}],\mathcal{F}[\phi_{\bu,h}^{n+1}])|\leq& CRe\|\nabla \mathcal{F}[\eta_u^{n+1}]\|^2\| \nabla \mathcal{F}[u^{n+1}]\|^2+\frac{Re^{-1}}{24}\|\nabla \mathcal{F}[\phi_{\bu,h}^{n+1}]\|^2,\allowdisplaybreaks\\
s|(\mathcal{F}[\bB_h^{n+1}]\cdot \nabla
\mathcal{F}[\eta_B^{n+1}],\mathcal{F}[\phi_{\bu,h}^{n+1}])\leq& Cs^2Re\|\nabla \mathcal{F}[\bB_h^{n+1}]\|^2\|\nabla
\mathcal{F}[\eta_B^{n+1}]\|^2+\frac{Re^{-1}}{24}\|\nabla \mathcal{F}[\phi_{\bu,h}^{n+1}]\|^2,\allowdisplaybreaks\\
s|(\mathcal{F}[\phi_{\bB,h}^{n+1}]\cdot \nabla \mathcal{F}[\bB^{n+1}],\mathcal{F}[\phi_{\bu,h}^{n+1}])|\leq& Cs^4{Re}^{2}{Re}_m\|\mathcal{F}[\phi_{\bB,h}^{n+1}]\|^2\| \nabla \mathcal{F}[\bB^{n+1}]\|^4\nonumber\\
&+\frac{Re_m^{-1}}{4}\|\nabla \mathcal{F}[\phi_{\bB,h}^{n+1}]\|^2+\frac{Re^{-1}}{24}\|\nabla \phi_{\bu,h}\|^2,
\allowdisplaybreaks\\s|(\mathcal{F}[\eta_B^{n+1}]\cdot \nabla \mathcal{F}[\bB^{n+1}],\mathcal{F}[\phi_{\bu,h}^{n+1}])| \leq& Cs^2Re\|\nabla\mathcal{F}[\eta_B^{n+1}]\|^2\| \nabla \mathcal{F}[\bB^{n+1}]\|^2+\frac{Re^{-1}}{24}\|\nabla \mathcal{F}[\phi_{\bu,h}^{n+1}]\|^2.\label{boundvel2}
\end{align}
In addition, the terms in consistency error $|E_1(\bu,\bB,\mathcal{F}[\phi_{\bu,h}^{n+1}])|$ are bounded by using Cauchy-Schwarz, Poincar\`{e} and Young's inequalities as follows:
\begin{eqnarray}
\lefteqn{\abs{\left(\frac{{3} {\bu}^{n+1}-4\bu^n+\bu^{n-1}}{2\Delta t}-\bu_t^{n+1},\mathcal{F}[\phi_{\bu,h}^{n+1}]\right)}}\nonumber\\&\leq& CRe\|\frac{{3} {\bu}^{n+1}-4\bu^n+\bu^{n-1}}{2\Delta t}-\bu_t^{n+1}\|^2+\frac{Re^{-1}}{24}\|\nabla \mathcal{F}[\phi_{\bu,h}^{n+1}]\|^2,\\[0.4cm]
\lefteqn{Re^{-1}|(\nabla \mathcal{F}[\bu^{n+1}]-u^{n+1},\nabla \mathcal{F}[\phi_{\bu,h}^{n+1}])|}\nonumber\\&\leq&CRe^{-1}\|\nabla \mathcal{F}[\bu^{n+1}]-u^{n+1}\|^2+\frac{Re^{-1}}{24}\|\nabla \mathcal{F}[\phi_{\bu,h}^{n+1}]\|^2,
\\[0.4cm]
\lefteqn{(\mathcal{F}[\bu^{n+1}]\cdot\nabla \mathcal{F}[\bu^{n+1}],\mathcal{F}[\phi_{\bu,h}^{n+1}])-(u^{n+1}\cdot\nabla u^{n+1},\mathcal{F}[\phi_{\bu,h}^{n+1}])}\nonumber
\\&\leq& CRe(\|\nabla \mathcal{F}[\bu^{n+1}]\|^2 +\|\nabla {u}^{n+1}\|^2)\|\nabla \mathcal{F}[\bu^{n+1}]-u^{n+1}\|^2\nonumber\\&&+\frac{Re^{-1}}{24}\|\nabla \mathcal{F}[\phi_{\bu,h}^{n+1}]\|^2,\\
[0.4cm]
\lefteqn{s(B^{n+1}\cdot\nabla B^{n+1},\mathcal{F}[\phi_{\bu,h}^{n+1}])-s(\mathcal{F}[\bB^{n+1}]\cdot\nabla \mathcal{F}[\bB^{n+1}],\mathcal{F}[\phi_{\bu,h}^{n+1}])}\nonumber
\\&\leq& CRe(\|\nabla \mathcal{F}[\bB^{n+1}]\|^2 +\|\nabla {B}^{n+1}\|^2)\|\nabla \mathcal{F}[\bB^{n+1}]-\bB^{n+1}\|^2\nonumber\\&&+\frac{Re^{-1}}{24}\|\nabla \mathcal{F}[\phi_{\bu,h}^{n+1}]\|^2.\label{sbE}
\end{eqnarray}

Inserting  (\ref{boundvel1})-(\ref{sbE}) into \eqref{velerror1} yields
\begin{eqnarray}
\lefteqn{\frac{1}{\Delta t}\norm{ \begin{bmatrix}
		\phi_{\bu,h}^{n+1} \\
		\phi_{\bu,h}^{n}
		\end{bmatrix}} _{G}^{2} -\frac{1}{\Delta t}\norm { \begin{bmatrix}
		\phi_{\bu,h}^{n} \\
		\phi_{\bu,h}^{n-1}
		\end{bmatrix}}_{G}^{2} 	+\dfrac{1}{4\Delta t}\norm{\phi_{\bu,h}^{n+1}-2\phi_{\bu,h}^{n}+\phi_{\bu,h}^{n-1}}_{F}^{2}}\nonumber\\
	&&
 +\frac{Re^{-1}}{2}\|{\nabla \mathcal{F}[\phi_{\bu,h}^{n+1}]}\|^{2} \nonumber
\\
&\leq& CRe\bigg(
\|\frac{3\eta_u^{n+1}-4\eta_u^n+\eta_u^{n-1}}{2\Delta t}\|^2
+Re^{-2}\|\nabla\mathcal{F}[\eta_u^{n+1}]\|^2+\|\nabla\mathcal{F}[ u^{n+1}]\|^2 \|\nabla \mathcal{F}[\eta_u^{n+1}]\|^2
\nonumber \\
&&
+ Re^2\|\mathcal{F}[\phi_{\bu,h}^{n+1}]\|^2 \|\nabla \mathcal{F} [\bu^{n+1}]\|^4
+\|\nabla \mathcal{F}[\eta_u^{n+1}]\|^2 \|\nabla \mathcal{F}[u^{n+1}]\|^2
\nonumber \\[0.1cm]
&&
+s^2\|\nabla \mathcal{F}[\bB_h^{n+1}]\| \|  \nabla\mathcal{F}[\eta_B^{n+1}]\|
+s^4{Re}{Re}_m\|\mathcal{F}[\phi_{\bB,h}^{n+1}]\|^2\| \nabla \mathcal{F}[B^{n+1}]\|^4\nonumber \\
&& +s^2\|\nabla\mathcal{F}[\eta_B^{n+1}]\|^2\| \nabla \mathcal{F}[B^{n+1}]\|^2
+\|\frac{{3} {\bu}^{n+1}-4\bu^n+\bu^{n-1}}{2\Delta t}-\bu_t^{n+1}\|^2\nonumber\\
&&+\Big(Re^{-2}+\|\nabla \mathcal{F}[\bu^{n+1}]\|^2 +\|\nabla {u}^{n+1}\|^2\Big)\|\nabla \mathcal{F}[\bu^{n+1}]-u^{n+1}\|^2\nonumber\\&&+\Big(\|\nabla \mathcal{F}[\bB^{n+1}]\|^2 +\|\nabla {B}^{n+1}\|^2\Big)\|\nabla \mathcal{F}[\bB^{n+1}]-\bB^{n+1}\|^2\bigg)\nonumber\\&&+s(\mathcal{F}[\bB_h^{n+1}]\cdot \nabla\mathcal{F}[\phi_{\bB,h}^{n+1}],\mathcal{F}[\phi_{\bu,h}^{n+1}])+\frac{Re_m^{-1}}{4}\|\nabla \mathcal{F}[\phi_{\bB,h}^{n+1}]\|^2.\label{velerror21}
\end{eqnarray}
Multiplying (\ref{velerror21}) by $\Delta t$  and summing from $t=1$ to $t=N-1$, we have
\begin{eqnarray}
\lefteqn{\norm{ \begin{bmatrix}
		\phi_{\bu,h}^{N} \\
		\phi_{\bu,h}^{N-1}
		\end{bmatrix}} _{G}^{2} 	+\dfrac{1}{4}\sum_{n=1}^{N-1}\norm{\phi_{\bu,h}^{n+1}-2\phi_{\bu,h}^{n}+\phi_{\bu,h}^{n-1}}_{F}^{2}\nonumber
+\frac{Re^{-1}\Delta t}{2}\sum_{n=1}^{N-1}\|{\nabla \mathcal{F}[\phi_{\bu,h}^{n+1}]}\|^{2} }\nonumber
\allowdisplaybreaks\\\allowdisplaybreaks
&\leq& \norm { \begin{bmatrix}
	\phi_{\bu,h}^{1} \\
	\phi_{\bu,h}^{0}
	\end{bmatrix}}_{G}^{2} +C\bigg(
\Delta t\sum_{n=1}^{N-1}\|\frac{3\eta_u^{n+1}-4\eta_u^n+\eta_u^{n-1}}{2\Delta t}\|^2
+\Delta t\sum_{n=1}^{N-1}\|\nabla\mathcal{F}[\eta_u^{n+1}]\|^2\nonumber\allowdisplaybreaks\\&&+\Delta t\sum_{n=1}^{N-1}\|\nabla\mathcal{F}[ u^{n+1}]\|^2 \|\nabla \mathcal{F}[\eta_u^{n+1}]\|^2+\Delta t\sum_{n=1}^{N-1} Re^3\|\mathcal{F}[\phi_{\bu,h}^{n+1}]\|^2 \|\nabla \mathcal{F}[u^{n+1}]\|^4
\nonumber \allowdisplaybreaks\\
&&
+\Delta t\sum_{n=1}^{N-1}\|\nabla \mathcal{F}[\eta_u^{n+1}]\|^2 \|\nabla \mathcal{F}[u^{n+1}]\|^2 +s^2\Delta t\sum_{n=1}^{N-1}\|\nabla \mathcal{F}[\bB_h^{n+1}]\| \|  \nabla\mathcal{F}[\eta_B^{n+1}]\|
\nonumber\allowdisplaybreaks \\
&&
+s^4{Re}^2{Re}_m\Delta t\sum_{n=1}^{N-1}\|\mathcal{F}[\phi_{\bB,h}^{n+1}]\|^2\| \nabla \mathcal{F}[B^{n+1}]\|^4\nonumber \allowdisplaybreaks\\
&& +s^2\Delta t\sum_{n=1}^{N-1}\|\nabla\mathcal{F}[\eta_B^{n+1}]\|^2\| \nabla \mathcal{F}[B^{n+1}]\|^2
+\Delta t\sum_{n=1}^{N-1}\|\frac{{3} {\bu}^{n+1}-4\bu^n+\bu^{n-1}}{2\Delta t}-\bu_t^{n+1}\|^2\nonumber\\
&&+\Delta t\sum_{n=1}^{N-1}\Big(Re^{-2}+\|\nabla \mathcal{F}[\bu^{n+1}]\|^2 +\|\nabla {u}^{n+1}\|^2\Big)\|\nabla \mathcal{F}[\bu^{n+1}]-u^{n+1}\|^2\nonumber\\&&+\Delta t\sum_{n=1}^{N-1}\Big(\|\nabla \mathcal{F}[\bB^{n+1}]\|^2 +\|\nabla {B}^{n+1}\|^2\Big)\|\nabla \mathcal{F}[\bB^{n+1}]-\bB^{n+1}\|^2\bigg)\nonumber\\&&+s\Delta t\sum_{n=1}^{N-1}(\mathcal{F}[\bB_h^{n+1}]\cdot \nabla\mathcal{F}[\phi_{\bB,h}^{n+1}],\mathcal{F}[\phi_{\bu,h}^{n+1}])+\frac{Re_m^{-1}\Delta t}{4}\sum_{n=1}^{N-1}\|\nabla \mathcal{F}[\phi_{\bB,h}^{n+1}]\|^2.\label{velerror3}
\end{eqnarray}
Using Lemma \ref{lem:fw} and approximation properties \eqref{app1}-\eqref{app2}, we have
\begin{eqnarray}
\Delta t \sum_{n=1}^{N-1}\|\frac{3\eta_u^{n+1}-4\eta_u^n+\eta_u^{n-1}}{2\Delta t}\|^2
&\leq&{C }h^{2s+2}||\bu_{t}||^2_{L^2(0,T;H^{s+1}(\Omega))},\label{err1}\\
\Delta t \sum_{n=1}^{N-1}\|\nabla(\mathcal{F}[\eta_u^{n+1}])\|^2&\leq&Ch^{2s} |||\bu|||^2 _{2,s+1},\\
\Delta t \sum_{n=1}^{N-1}\|\nabla(\mathcal{F}[\eta_B^{n+1}])\|^2&\leq&Ch^{2s} |||\bB|||^2 _{2,s+1},\\
\Delta t \sum_{n=1}^{N-1}\|\frac{{3} {\bu}^{n+1}-4\bu^n+\bu^{n-1}}{2\Delta t}-\bu_t^{n+1}\|^2&\leq& C\Delta t^4\|\bu_{ttt}\|_{L^2(0,T;L^2(\Omega))}^2,\\
\Delta t \sum_{n=1}^{N-1}\|\nabla \mathcal{F}[\bu^{n+1}]-u^{n+1}\|^2&\leq& C\Delta t^4 \|\nabla \bu_{tt}\|^2_{L^2(0,T;L^2(\Omega))},\allowdisplaybreaks\\\allowdisplaybreaks
\Delta t \sum_{n=1}^{N-1}\|\nabla \mathcal{F}[\bB^{n+1}]-\bB^{n+1}\|^2&\leq& C\Delta t^4 \|\nabla \bB_{tt}\|^2_{L^2(0,T;L^2(\Omega))}.\label{err6}
\end{eqnarray}
Substituting \eqref{err1}-\eqref{err6} in (\ref{velerror3}) and utilizing Lemma \ref{l:stab}, one gets
\begin{eqnarray}
\lefteqn{\norm{ \begin{bmatrix}
		\phi_{\bu,h}^{n+1} \\
		\phi_{\bu,h}^{n}
		\end{bmatrix}} _{G}^{2} -\norm { \begin{bmatrix}
		\phi_{\bu,h}^{n} \\
		\phi_{\bu,h}^{n-1}
		\end{bmatrix}}_{G}^{2} 	+\dfrac{1}{4}\norm{\phi_{\bu,h}^{n+1}-2\phi_{\bu,h}^{n}+\phi_{\bu,h}^{n-1}}_{F}^{2}}\nonumber\\
&&
+\frac{Re^{-1}\Delta t}{2}\|{\nabla \mathcal{F}[\phi_{\bu,h}^{n+1}]}\|^{2} \nonumber
\\
&\leq& C\bigg({h^{2s+2}}||\bu_{t}||^2_{{L^2(0,T;H^{s+1}(\Omega))}}
+(Re^{-2}+|||\nabla u|||_{\infty,0}^2 )h^{2s} |||\bu|||^2_{2,s+1}\nonumber\\
&&+\Big((\frac{1}{3})^N(\|{\bu}_h^{0}\|^2+s\|{\bB}_h^{0}\|^2)+N( 2(\|{\bu}_h^{1}\|^2+s\|{\bB}_h^{1}\|^2)+(\|{\bu}_h^{0}\|^2+s\|{\bB}_h^{0}\|^2) ) \nonumber\allowdisplaybreaks\\
&&+\frac{2 N\Delta t}{3}\sum_{n=1}^{N-1}(Re\|\bff(t^{n+1})\|_{-1}^2+s Re_m\|\bfg(t^{n+1})\|^2)\Big)h^{2s} (|||{\bu}|||_{2,s+1}^2+|||{\bB}|||_{2,s+1}^2)\nonumber\allowdisplaybreaks\\
&& +Re^3|||\nabla u|||_{\infty,0}^4 \Delta t\sum_{n=1}^{N-1}\|\mathcal{F}[\phi_{\bu,h}^{n+1}]\|^2
+s^4{Re}^2{Re}_m||| \nabla \mathcal{F}[B]|||_{\infty,0}^4\sum_{n=1}^{N-1}\|\mathcal{F}[\phi_{\bB,h}^{n+1}]\|^2\nonumber\allowdisplaybreaks\\&&+(s||| \nabla\mathcal{F}[ B]|||_{\infty,0}^2)h^{2s} |||\bB|||^2_{2,s+1}+\Delta t^4||\bu_{ttt}||_{L^2(0,T;L^2(\Omega))}^2\nonumber\allowdisplaybreaks \\
&&
+\Big(Re^{-2}+|||\nabla \mathcal{F}[\bu]|||_{\infty,0}^2 +|||\nabla {u}|||_{\infty,0}^2\Big)\Delta t^4 ||\nabla \bu_{tt}||^2_{L^2(0,T;L^2(\Omega))}\nonumber\\&&+\Big(|||\nabla \mathcal{F}[\bB]|||_{\infty,0}^2 +|||\nabla {B}|||_{\infty,0}^2\Big)\Delta t^4 ||\nabla \bB_{tt}||^2_{L^2(0,T;L^2(\Omega))}\bigg)\nonumber\\&&+s\Delta t\sum_{n=1}^{N-1}(\mathcal{F}[\bB_h^{n+1}]\cdot \nabla\mathcal{F}[\phi_{\bB,h}^{n+1}],\mathcal{F}[\phi_{\bu,h}^{n+1}])+\frac{Re_m^{-1}\Delta t}{4}\sum_{n=1}^{N-1}\|\nabla \mathcal{F}[\phi_{\bB,h}^{n+1}]\|^2. \label{velerror5}
\end{eqnarray}
Reorganizing equation (\ref{velerror5}), we have
\begin{eqnarray}
\lefteqn{\norm{ \begin{bmatrix}
		\phi_{\bu,h}^{N} \\[5pt]
		\phi_{\bu,h}^{N-1}
		\end{bmatrix}} _{G}^{2}
	+\dfrac{1}{4}\sum_{n=1}^{N-1}\norm{\phi_{\bu,h}^{n+1}-2\phi_{\bu,h}^{n}+\phi_{\bu,h}^{n-1}}_{F}^{2}+\frac{Re^{-1}\Delta t}{2}\sum_{n=1}^{N-1}\|{\nabla( \mathcal{F}[\phi_{\bu,h}^{n+1}])}\|^{2}} \nonumber
\\
&\leq& \norm{ \begin{bmatrix}
	\phi_{\bu,h}^{1} \\[5pt]
	\phi_{\bu,h}^{0}
	\end{bmatrix}} _{G}^{2}+C(
h^{2s+2}+ h^{2s} +\Delta t^4)+Re^3 |||\nabla \mathcal{F}[\bu]|||_{\infty,0}^4\Delta t\sum_{n=1}^{N-1}\|\mathcal{F}[\phi_{\bu,h}^{n+1}]\|^2
\nonumber \\
&&
+s^4{Re}^2{Re}_m||| \nabla \mathcal{F}[B]|||_{\infty,0}^4\Delta t\sum_{n=1}^{N-1}\|\mathcal{F}[\phi_{\bB,h}^{n+1}]\|^2+\sum_{n=1}^{N-1}\frac{Re_m^{-1}}{4}\|\nabla \mathcal{F}[\phi_{\bB,h}^{n+1}]\|^2\nonumber\\
&& +s\Delta t\sum_{n=1}^{N-1}(\mathcal{F}[\bB_h^{n+1}]\cdot \nabla\mathcal{F}[\phi_{\bB,h}^{n+1}],\mathcal{F}[\phi_{\bu,h}^{n+1}]). \label{velerror4}
\end{eqnarray}
In a similar manner, substituting \eqref{dec2} into \eqref{errB}, and setting $\chi_h = \mathcal{F}[\phi_{\bB,h}^{n+1}]$ gives
\begin{eqnarray}
\lefteqn{\norm{ \begin{bmatrix}
		\phi_{\bB,h}^{N} \\[5pt]
		\phi_{\bB,h}^{N-1}
		\end{bmatrix}} _{G}^{2}
	+\dfrac{1}{4}\sum_{n=1}^{N-1}\norm{\phi_{\bB,h}^{n+1}-2\phi_{\bB,h}^{n}+\phi_{\bB,h}^{n-1}}_{F}^{2}
	+\frac{Re_m^{-1}\Delta t}{2}\sum_{n=1}^{N-1}\|{\nabla \mathcal{F}[\phi_{\bB,h}^{n+1}]}\|^{2}} \nonumber
\\
&\leq& \norm{ \begin{bmatrix}
	\phi_{\bB,h}^{1} \\[5pt]
	\phi_{\bB,h}^{0}
	\end{bmatrix}} _{G}^{2}+C \Big(
h^{2s+2}+ h^{2s}+\Delta t^4\Big)
+sReRe_m^2 \| \nabla \mathcal{F}[B]\|_{\infty,0}^4\Delta t\sum_{n=1}^{N-1}\|\mathcal{F}[\phi_{\bu,h}^{n+1}]\|^2\nonumber\\
&& +Re_m^3 \|\nabla \mathcal{F}[\bu]\|_{\infty,0}^4\Delta t\sum_{n=1}^{N-1}\|\mathcal{F}[\phi_{\bB,h}^{n+1}]\|^2+\sum_{n=1}^{N-1}\frac{Re^{-1}\Delta t}{4s}\|\nabla \mathcal{F}[\phi_{\bu,h}^{n+1}]\|^2\nonumber\\&&+\Delta t\sum_{n=1}^{N-1}(\mathcal{F}[\bB_h^{n+1}]\cdot \nabla\mathcal{F}[\phi_{\bu,h}^{n+1}],\mathcal{F}[\phi_{\bB,h}^{n+1}]). \label{magerror4}
\end{eqnarray}
Multiplying (\ref{magerror4}) by $s $, adding it to (\ref{velerror4}) and using that $$(\mathcal{F}[{B_h}^{n+1}]\cdot \nabla \mathcal{F}[\phi_{\bu,h}^{n+1}],\mathcal{F}[\phi_{\bB,h}^{n+1}])=-(\mathcal{F}[{B_h}^{n+1}]\cdot
\nabla\mathcal{F}[\phi_{\bB,h}^{n+1}],\mathcal{F}[\phi_{\bu,h}^{n+1}]),$$ we get
\begin{eqnarray}
\lefteqn{\norm{ \begin{bmatrix}
		\phi_{\bu,h}^{N} \\[5pt]
		\phi_{\bu,h}^{N-1}
		\end{bmatrix}} _{G}^{2}+s\norm{ \begin{bmatrix}
		\phi_{\bB,h}^{N} \\[5pt]
		\phi_{\bB,h}^{N-1}
		\end{bmatrix}} _{G}^{2}
	+\frac{\Delta t}{4}\sum_{n=0}^{N-1}(Re^{-1}\|{\nabla \mathcal{F}[\phi_{\bu,h}^{n+1}]}\|^{2}
	+sRe_m^{-1}\|{\nabla \mathcal{F}[\phi_{\bB,h}^{n+1}]}\|^{2})} \nonumber
\allowdisplaybreaks\\
&&+\dfrac{1}{4}\sum_{n=0}^{N-1}(\norm{\phi_{\bu,h}^{n+1}-2\phi_{\bu,h}^{n}+\phi_{\bu,h}^{n-1}}_{F}^{2}+s\norm{\phi_{\bB,h}^{n+1}-2\phi_{\bB,h}^{n}+\phi_{\bB,h}^{n-1}}_{F}^{2})\nonumber\allowdisplaybreaks\\
&\leq& \norm{ \begin{bmatrix}
	\phi_{\bu,h}^{1} \\[5pt]
	\phi_{\bu,h}^{0}
	\end{bmatrix}} _{G}^{2}+ s\norm{ \begin{bmatrix}
	\phi_{\bB,h}^{1} \\[5pt]
	\phi_{\bB,h}^{0}
	\end{bmatrix}} _{G}^{2}+C(
h^{2s+2}+h^{2s}+{\Delta t}^4)
\nonumber\allowdisplaybreaks \\
&& +\Big( Re^3|||\nabla \mathcal{F}[\bu]|||_{\infty,0}^4+s^2ReRe_m^2 ||| \nabla \mathcal{F}[ B]|||_{\infty,0}^4\Big)\Delta t\sum_{n=0}^{N-1}\|\mathcal{F}[\phi_{\bu,h}^{n+1}]\|^2\nonumber\allowdisplaybreaks\\
&& +\Big(sRe_m^3 |||\nabla \mathcal{F}[\bu]|||_{\infty,0}^4+s^4{Re}^2{Re}_m||| \nabla \mathcal{F}[B]|||_{\infty,0}^4\Big)\Delta t\sum_{n=0}^{N-1}\|\mathcal{F}[\phi_{\bB,h}^{n+1}]\|^2
. \label{velmagerror1}
\end{eqnarray}
Application of the discrete Gronwall inequality with
\begin{eqnarray}
	\Delta t &\leq&C(s)\Big( |||\nabla\bu|||_{\infty,0}^4+ ||| \nabla  B|||_{\infty,0}^4\Big)^{-1},
\end{eqnarray}
and utilization of Lemma \ref{lem:gnorm} yields
\begin{eqnarray}
\lefteqn{\dfrac{3}{4}(\|{\phi_{\bu,h}^{N}}\|^{2}+s\|{\phi_{\bB,h}^{N}}\|^{2})+\frac{\Delta t}{4}\sum_{n=0}^{N-1}(Re^{-1}\|{\nabla \mathcal{F}[\phi_{\bu,h}^{n+1}]}\|^{2}
	+sRe_m^{-1}\|{\nabla \mathcal{F}[\phi_{\bB,h}^{n+1}]}\|^{2})} \nonumber
\\
&\leq&\dfrac{1}{4}(\|{\phi_{\bu,h}^{N-1}}\|^{2}+s\|{\phi_{\bB,h}^{N-1}}\|^{2})+ \frac{3}{2}\Big(\norm{\phi_{\bu,h}^{1}}^{2}+s\norm{\phi_{\bB,h}^{1}}^{2}\nonumber\\&&+\norm{\phi_{\bu,h}^{0}}^{2}+s\norm{\phi_{\bB,h}^{0}}^{2}\Big)+C(
h^{2s}+{\Delta t}^4). \label{velmagerror2}
\end{eqnarray}
Multiplying (\ref{velmagerror2}) with $\frac{4}{3}$ and applying induction produces
\begin{eqnarray}
\lefteqn{\|{\phi_{\bu,h}^{N}}\|^{2}+s\|{\phi_{\bB,h}^{N}}\|^{2}+\frac{\Delta t}{3}\sum_{n=0}^{N-1}(Re^{-1}\|{\nabla \mathcal{F}[\phi_{\bu,h}^{n+1}]}\|^{2}
	+sRe_m^{-1}\|{\nabla \mathcal{F}[\phi_{\bB,h}^{n+1}]}\|^{2})} \nonumber
\\
&\leq&\Big(\frac{1}{3}\Big)^N(\|{\phi_{\bu,h}^{0}}\|^{2}+s\|{\phi_{\bB,h}^{0}}\|^{2})+ 2N\Big(\norm{\phi_{\bu,h}^{1}}^{2}+s\norm{\phi_{\bB,h}^{1}}^{2}\nonumber\\&&+\norm{\phi_{\bu,h}^{0}}^{2}+s\norm{\phi_{\bB,h}^{0}}^{2}\Big)+C(
h^{2s}+{\Delta t}^4). \label{velmagerror3}
\end{eqnarray}
The proof is completed by applying the triangle inequality.
\end{proof}
\section{Numerical Studies}
In this section, Algorithm \ref{algo} presented in Section 3 will be studied at examples given in a two-dimensional domain $\Omega$. We perform three different numerical tests in order to expose the promise of proposed method. The first example has been designed to confirm the theoretically predicted results of Theorem \ref{The:conv}. In the second test, we check the energy and cross-helicity conservation properties of the scheme for an ideal MHD case. In the final test, we investigate the flow behavior in a channel over a step under the effect of magnetic field.  The initial velocity, the initial pressure and the initial magnetic field were computed as nodal interpolants if not stated otherwise. For all simulations, the Scott-Vogelious pair of finite elements  $( (P_{2})^2, P_{1}^{disc})$ on  barycenter refined triangular meshes is used. The computations were performed with the public license finite element software FreeFem++ \cite{hec}.

\subsection{Convergence Rate Verification}

We consider the MHD equation (\ref{mhd5})-(\ref{mhd8}) in the unit square and in the time interval $[0,1]$ where the right hand side and the boundary conditions are chosen such that
\begin{align}\label{truesol1}
\bu=\left(%
\begin{array}{c}
y^5 + t^2 \\
x^5 + t^2%
\end{array}%
\right),\quad
p=10(2x-1)(2y-1)(1+t^2),\quad
B=\left(%
\begin{array}{c}
t^2 + \sin y \\
t^2 + \sin x %
\end{array}\right)
\end{align}
is the solution. Other problem parameters are chosen as $Re = Re_m = s = 1$. Since we are studying convergence, the spatial meshwidth $h$ and the time step $\Delta t$ are set to be same in order to see the errors and rates at once.  We measure the errors in the discrete norm  $L^2(0,T;{ H}^1(\Omega))$ for the velocity and the magnetic field which could be written for the velocity for example:
 $$\|{u}-{u}^h\|_{2,1}=\left\{\Delta t \sum_{n=1}^{N}\|{u}(t^n)-{u}_{n}^{h}\|^{2}\right\}^{1/2}.$$
Table \ref{RateTable} reports the order of convergence for Algorithm \ref{algo}. One can observe the predicted  second order convergence for the errors estimated in Theorem \ref{The:conv}.
\begin{table}[h!]
\begin{center}
\begin{tabular}{|c|c|c|c|c|c|c|}
\hline
$h=\Delta t$ & $\|u-u_{h}\|_{2,1}$& rate &  $ \| B-B_{h} \|_{2,1}$ & rate \\
\hline
1/2 & 0.30650 & - & 0.01460 & - \\
\hline
1/4 & 0.08936 & 1.73 & 0.00995 & 0.63 \\
\hline
1/8 & 0.02239 & 2.01 & 0.00282 & 1.81 \\
\hline
1/16 & 0.00559 & 2.01 & 0.00071 & 1.98 \\
\hline
1/32 & 0.00139 & 2.00 & 0.00017 & 2.00\\
\hline
1/64 & 0.00034 & 2.04 & 4.44e-5 & 2.01\\
\hline
\end{tabular}
\end{center}
\caption{Errors and rates of convergence for the velocity and the magnetic field.}
\label{RateTable}
\end{table}
We note that this test was also carried out for both with filtering and not filtering the pressure. More precisely, $\tilde{P}_{h}^{n+1}$ in (\ref{BE1}) and $\tilde{\lambda}_{h}^{n+1}$ in (\ref{BE3}) are chosen as $P_{h}^{n+1}$ and $\lambda_{h}^{n+1}$ and not updated in Step $2$ of Algorithm \ref{algo}. In both ways, we obtain the same error rates showing that the pressure filtering does not affect the velocity and magnetic field solution, exactly the same situation for Navier-Stokes equations, \cite{DLZ19}.

\subsection{Orszag-Tang Vortex Test}

As a second numerical test, we solve Orszag-Tang vortex problem which is a well-known model for testing MHD codes. Due to the complex interaction between various shock waves traveling at different speed regimes, this problem tests robustness of the code in the formation of shocks and shock-shock interactions in the ideal MHD case, (see \cite{friedel, LW0} and references therein). In addition, since the numerical solution of Orzag-Tang vortex system does not necessarily  preserve the incompressible constraint $\nabla \cdot B=0$, this problem also provides some quantitative estimations for the effect of significant magnetic monopoles on the numerical solutions. In this test problem, our goal is to show the confirmation of the conserved quantities and compare the results with unfiltered case in order to see the effect of the time filter explained in detail in Section \ref{cons}.
The problem is solved in $[0,2\pi] \times [0,2\pi] $ using the meshwidth $h=1/32$, the time step $\Delta t=0.01$ and the final time $2.7$. For an ideal MHD case, the selected parameter choices are $Re = Re_m = \infty$, $s=1$ and $f=\nabla \times g = 0$. Consider the following initial conditions
 \begin{align}\label{truesol}
\bu_0=\left(%
\begin{array}{c}
-\sin (y+2) \\
\sin (x+1.4)%
\end{array}%
\right),\quad
B_0=\left(%
\begin{array}{c}
-\frac{1}{3}\sin (y + 6.2) \\
\frac{2}{3}\sin (2x + 2.3)%
\end{array}\right)
\end{align}
along with the periodic boundary conditions. Since an ideal MHD case is assumed, the global energy and the cross helicity defined by
 \begin{eqnarray*}
	E&=&\frac{1}{2}\int_{\Omega}(\bu (x)\bu (x)+ sB(x)B(x))dx,\\
    H&=&\frac{1}{2}\int_{\Omega}(\bu (x)B(x))dx
\end{eqnarray*}
should be conserved through the solutions obtained by  Algorithm \ref{algo}. As depicted in Figure \ref{fig:energyh}, the quantities of interest are exactly conserved, while the backward Euler scheme which consists of only discarding the filters fails to preserve them. We can deduce that the classical backward Euler method ruins the energy and cross helicity properties and time filters correct this behavior. Thus, the results for conserved quantities are consistent with the theory.

It is worth noting that the simulations are ran using the coarse mesh which already provides very similar results as the finest grid $4096\times 4096$ of \cite{friedel} and $1024 \times 1024$ of \cite{LW0}.

\begin{figure}[H]
	\centering
	\includegraphics[width=0.4\linewidth, height=0.25\textheight]{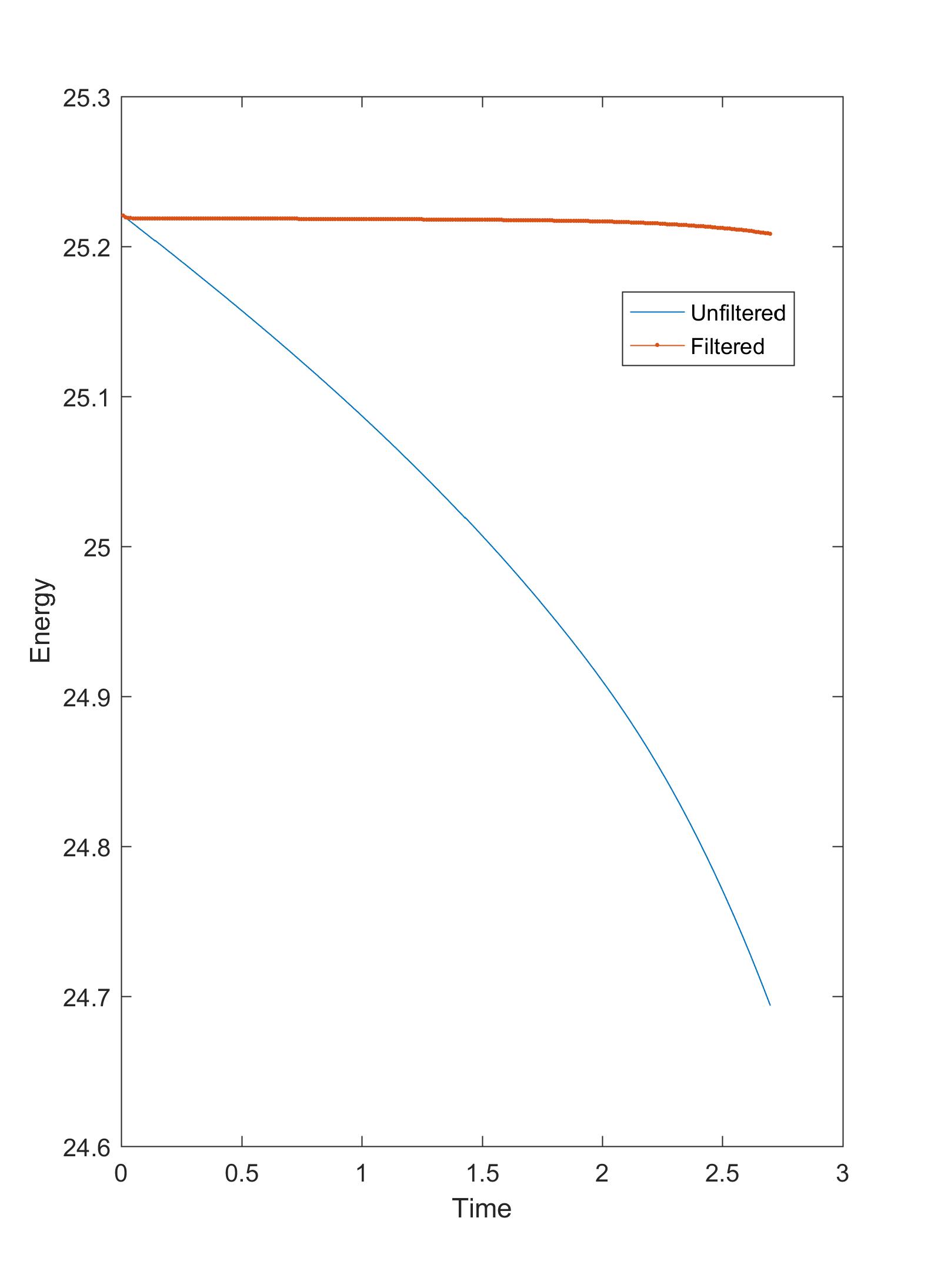}
	\hspace{0.5cm}
		\includegraphics[width=0.4\linewidth, height=0.25\textheight]{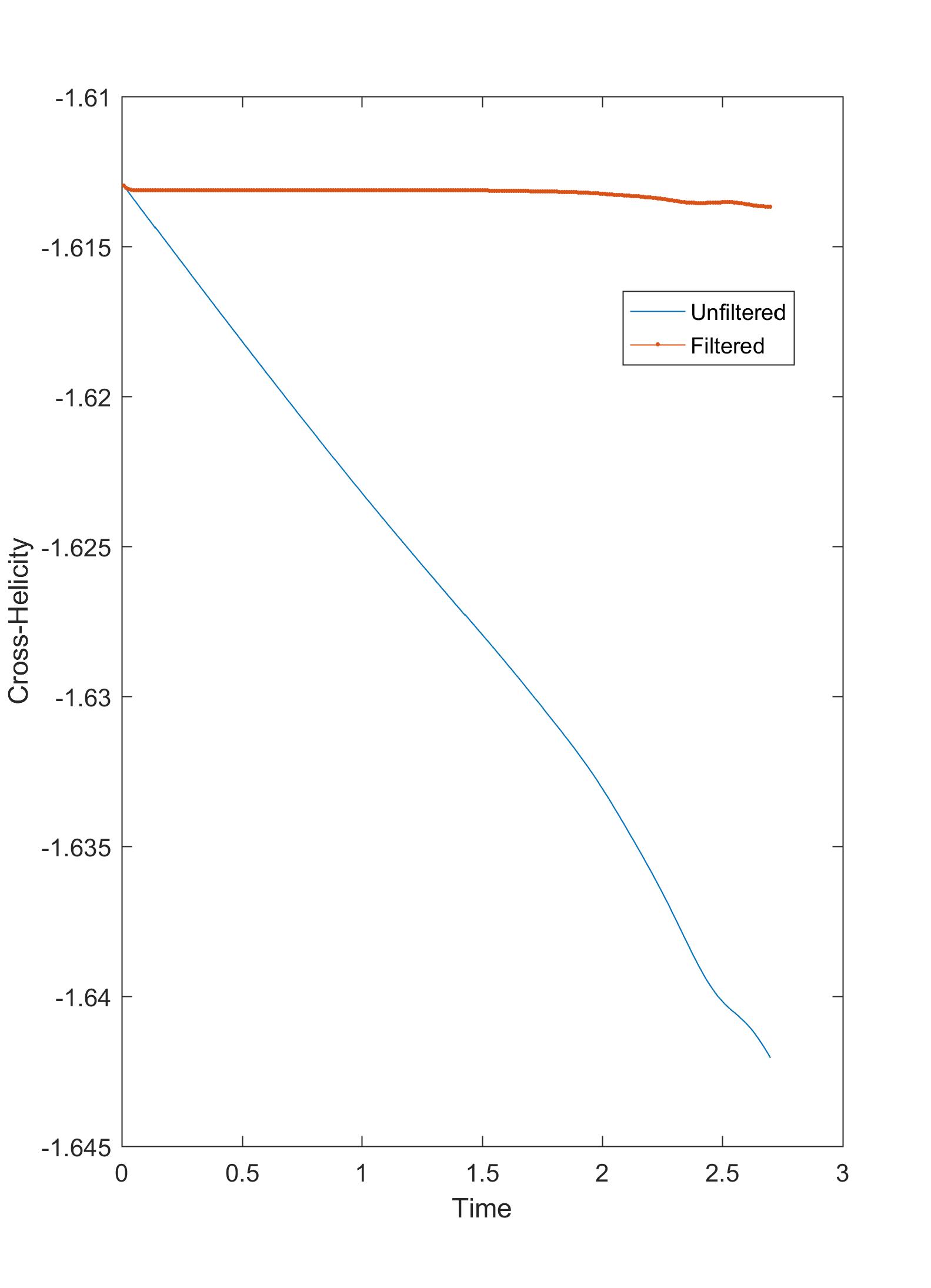}
	\caption{Energy and cross helicity versus time for
backward Euler (unfiltered case) and filtered backward Euler.}
	\label{fig:energyh}
\end{figure}

\subsection{MHD Channel Flow Over a Step}

Our final numerical example is to test Algorithm \ref{algo} for the benchmark  MHD channel flow over past a step. The problem geometry consists of a rectangular $40\times 10$ channel with a $1\times 1$ step places $5$ units into the channel at the bottom. We pick $Re=1000$ and $Re_m=1$ along with varying $s$ and Dirichlet boundary conditions corresponding to no slip velocity on the walls. We impose $\bu=\left( y(10-y)/25, 0 \right)^T$ for the velocity on the inlet and outlet and $\bu=0$ for the rest. For the magnetic field boundary condition, we take $B=\left(0, 1\right)^T$ on all boundaries. As initial conditions, we take $\bu=\left(y(10-y)/25, 0 \right)^T$ and $B=0$. The computations are carried out with $\Delta t =0.025$ up to an end time of $40$ that provides $328,148$ total degrees of freedom. The development of the flow is depicted in {Figure \ref{fig:stream}}. Our interest is only flow behaviour behind the step, thus we present the figures up to $30\times 10$ part of the channel. Note that since there is no magnetic force in the case of $s=0$, we only give velocity streamlines over speed contours.  For $s=0.01$, two eddies start to develop behind the step and the eddies separate from the step between $t=5$ and $t=10$. Due to the effect of the Lorentz force,  the peeling of the eddies behind the step is suppressed for $s=0.05$. As result, the solution captures the correct eddy formation and detachment behind the step. We note that the initial parabolic profile of the initial velocity is changed and the results shown in Figure \ref{fig:stream} are compatible with \cite{mine}.
\begin{figure}[H]
	\centering
		$s=0$\\
	\includegraphics[width=0.9\linewidth, height=0.15\textheight]{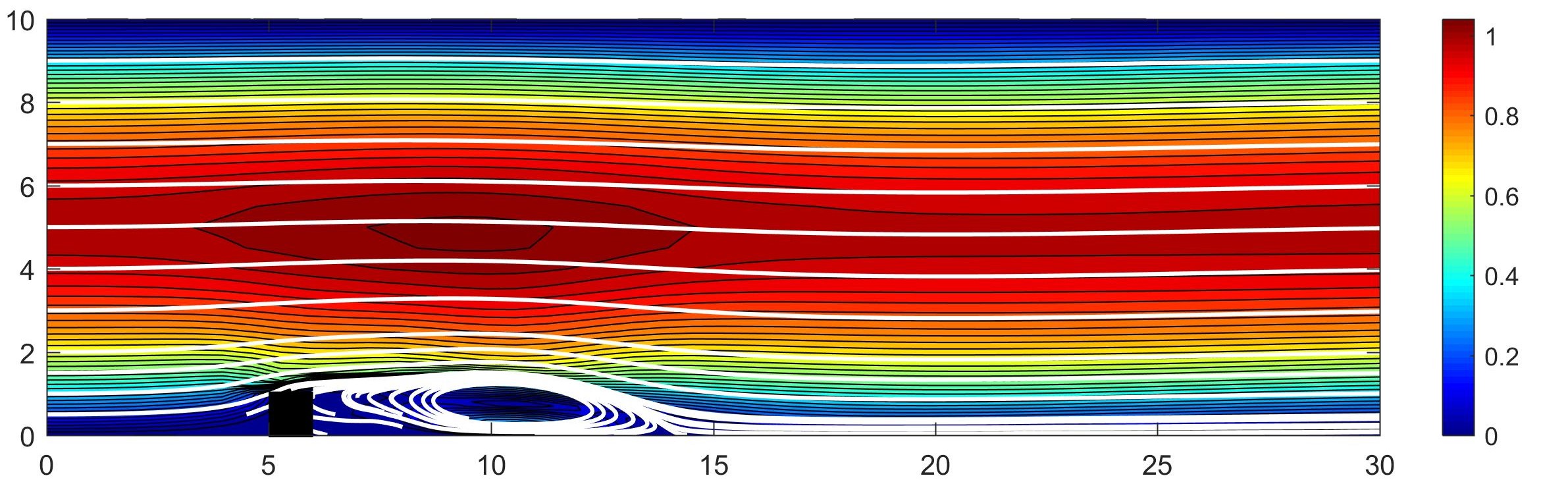}\\
	$s=0.01$\\
	\includegraphics[width=0.9\linewidth, height=0.15\textheight]{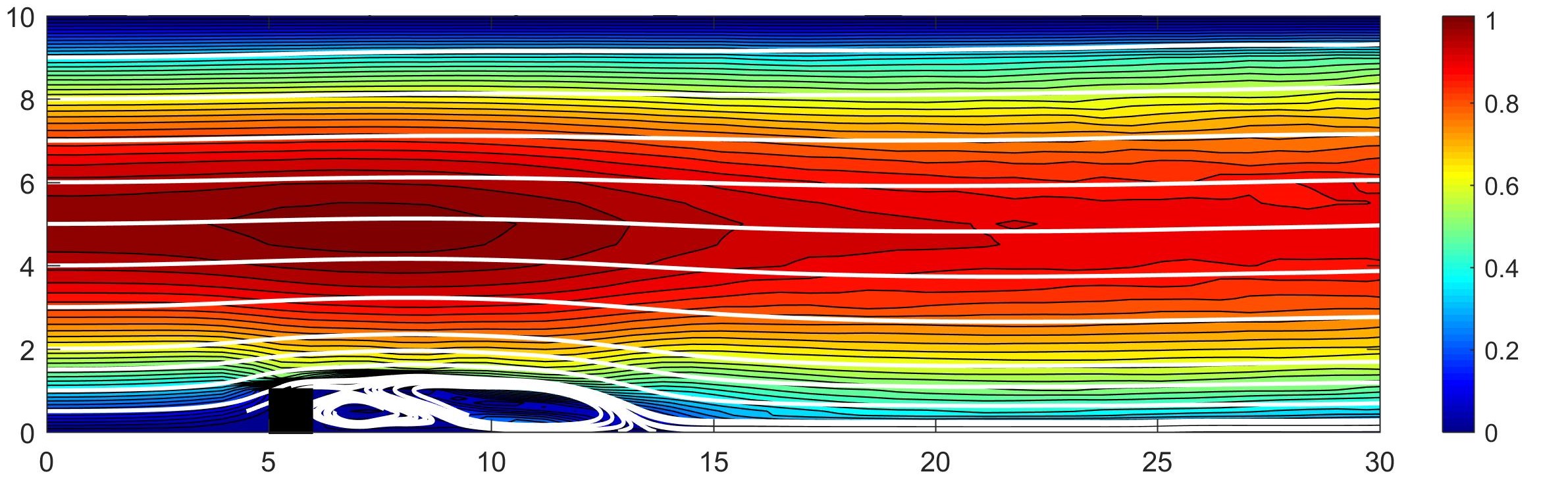}
	\includegraphics[width=0.9\linewidth, height=0.15\textheight]{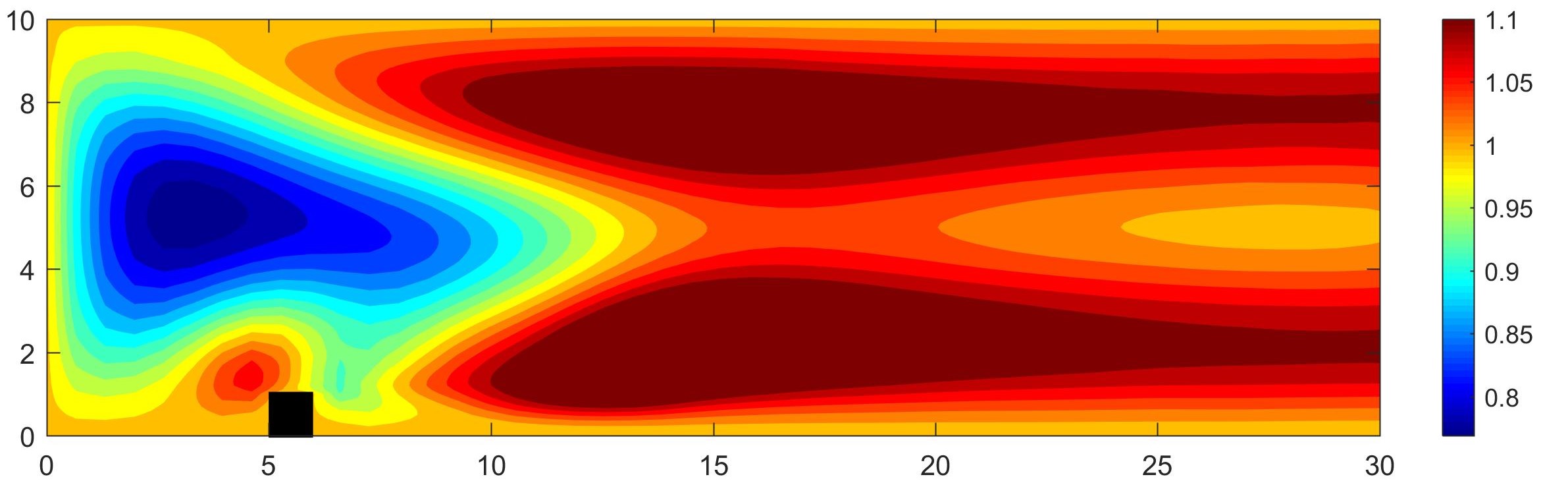}\\
		$s=0.05$\\
		\includegraphics[width=0.9\linewidth, height=0.15\textheight]{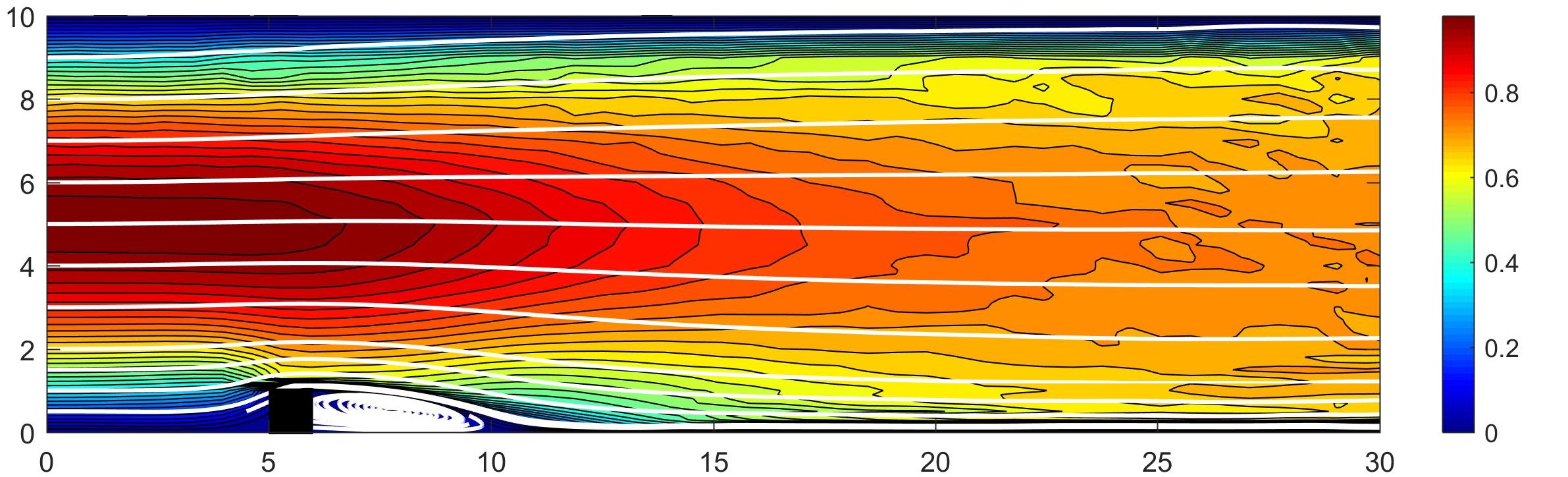}
	\includegraphics[width=0.9\linewidth, height=0.15\textheight]{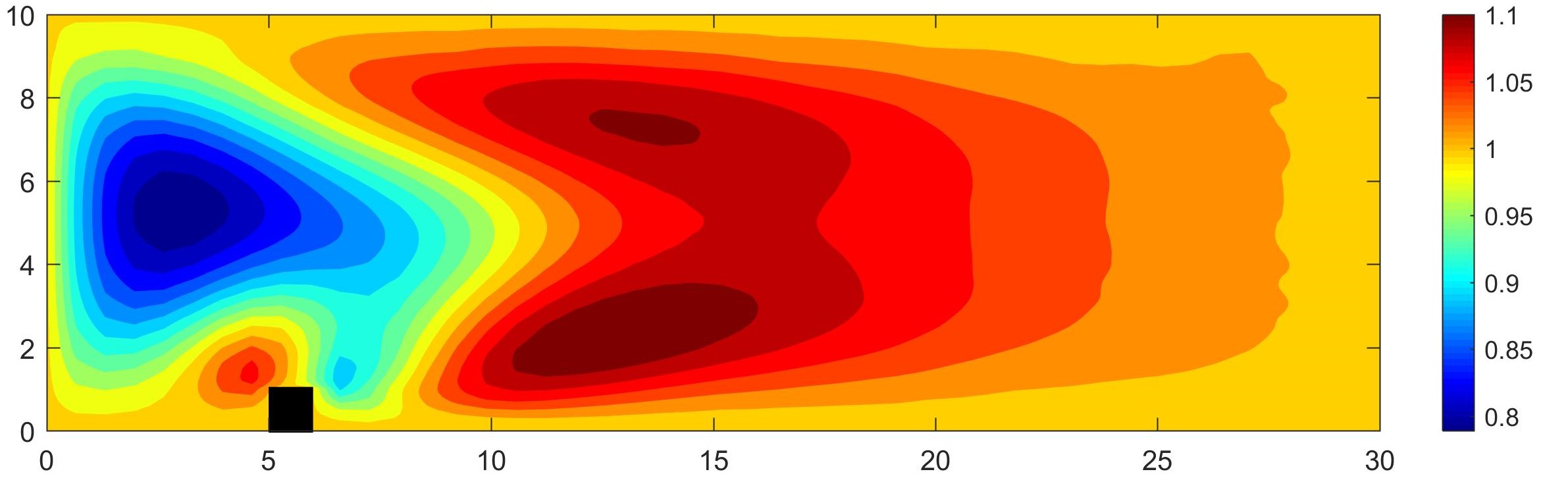}
	\caption{Plots of streamlines over speed and magnetic field contours for varying $s$}
	\label{fig:stream}
\end{figure}
\section{Conclusions}

An efficient time filtered method as a post processing step is introduced to MHD equations in a given backward Euler code. We have shown that the time filtered algorithm increases accuracy from first order to second order without any extra programming effort. We have provided a complete numerical analysis of the method, including unconditional, long time stability and optimal convergence rates. Moreover, time filtered backward Euler method conserves  energy and cross helicity when the solenoidal constraints on the velocity and magnetic field enforced strongly. Results of several numerical tests have been presented in order to verify all theoretical findings. The numerical investigations have shown the time filtered backward Euler method to be very effective and to predict the energy and helicity very well in comparison with the backward Euler method.

Several research directions will be pursued in future. For instance, we will study variable time step methods for MHD, which require only one BDF solve at each time level followed by addition of the solution at previous time steps to develop embedded family of higher order accuracy.  In addition, the extension of time filtering to more complex coupled flow problems such as MHD convection and double diffusive convection will be topics of future research.


\bibliographystyle{amsplain}
\bibliography{reference}
\end{document}